\numberwithin{equation}{section}
\numberwithin{figure}{section}
\theoremstyle{plain}
\newtheorem{thm}{\protect\theoremname}[section]
  \theoremstyle{remark}
  \newtheorem{rem}[thm]{\protect\remarkname}
  \theoremstyle{plain}
  \newtheorem{prop}[thm]{\protect\propositionname}
  \theoremstyle{plain}
  \newtheorem{lem}[thm]{\protect\lemmaname}
  \theoremstyle{plain}
  \newtheorem{cor}[thm]{\protect\corollaryname}
  \providecommand{\corollaryname}{Corollary}
  \providecommand{\lemmaname}{Lemma}
  \providecommand{\propositionname}{Proposition}
  \providecommand{\remarkname}{Remark}
\providecommand{\theoremname}{Theorem}
\begin{document}

\title[Relative motility and invasion speed]{Travelling waves for diffusive and strongly competitive systems:
relative motility and invasion speed }

\author{Léo Girardin, Grégoire Nadin}

\email{leo.girardin@ens-cachan.fr}
\begin{abstract}
Our interest here is to find the invader in a two species, diffusive
and competitive Lotka \textendash{} Volterra system in the particular
case of travelling wave solutions. We investigate the role of diffusion
in homogeneous domains. We might expect a priori two different cases:
strong interspecific competition and weak interspecific competition.
In this paper, we study the first one and obtain a clear conclusion:
the invading species is, up to a fixed multiplicative constant, the
more diffusive one.
\end{abstract}

\maketitle

\section{Introduction}

Competitive reaction \textendash{} diffusion systems have been widely
studied in the last few years. This mathematical model is motivated
by numerous applications: ecology, chemistry, genetics, etc. In general,
the mathematical formulation of this problem is, for some spatial
domain $\Omega$ (non-necessarily bounded), some $n\in\mathbb{N}$
and some positive constants $\left(d_{i},r_{i},a_{i},k_{i,j}\right)_{i,j\in\left\{ 1,\dots,n\right\} }$:
\begin{equation}\label{eq:generic_pde}
\forall i\in\left\{ 1,\dots,n\right\}\quad \partial_{t}u_{i}=d_{i}\Delta_{x}u_{i}+u_{i}\left(r_{i}-a_{i}u_{i}-\sum_{j\neq i}k_{i,j}u_{j}\right) \text{ in } \Omega\times\left(0,+\infty\right)
\end{equation}
One tough question is how their solutions and, when it exists, the
long-time steady state, depend on the diffusion rates $\left(d_{i}\right)_{i\in\left\{ 1,\dots,n\right\} }$.
Asymptotically, how do the species (if we see this as continuous approximation
of a population dynamics problem) represented by the densities 
$\left(u_{i}\right)_{i\in\left\{ 1,\dots,n\right\} }$
share the domain $\Omega$? Basically, in the neighbourhood of any spatial point $x$,
two cases may occur: either only one species persists (exclusion case)
or two or more persist (coexistence case). In the exclusion case,
the only persistent species is called invading species. A priori,
all the parameters participate in the determination of this invader:
number of species $n$, heterogeneity of $\Omega$, boundedness of
$\Omega$, boundary conditions, intrinsic growth rates $\left(r_{i}\right)_{i\in\left\{ 1,\dots,n\right\} }$,
interspecific competition rates $\left(k_{i,j}\right)_{i,j\in\left\{ 1,\dots,n\right\} }$,
intraspecific competition rates $\left(a_{i}\right)_{i\in\left\{ 1,\dots,n\right\} }$
and of course diffusion rates $\left(d_{i}\right)_{i\in\left\{ 1,\dots,n\right\} }$. 

The dependency on diffusion rates is a very open general problem.
Previous works show clearly that a very general result is for the
moment inachievable and that we are due to consider in each study
a specific case for the other parameters of the problem. A key work
in this area is the paper by Dockery et al. \cite{Dockery}. They
proved that, when $\Omega$ is bounded, heterogeneous, with Neumann
boundary conditions and when $k_{i,j}=1$ for all $i,j\in\left\{ 1,\dots,n\right\} $,
the less motile species \textendash{} that is the one with the lower
diffusion rate \textendash{} is the invading species. Their result
relies deeply on the heterogeneity, the basic idea being that each
species loses the individuals trying to invade unfavorable areas while,
in favorable areas, the competition helps the more concentrated one,
that is the less diffusive one. 

We leave the extension of Dockery\textquoteright s result for different
$\left(k_{i,j}\right)_{i,j\in\left\{ 1,\dots,n\right\} }$ to others
and wonder if a similar result can be obtained in homogeneous domains
(bounded or not).

Actually, it is quite tough to guess heuristically what could happen
in homogeneous domains. Indeed, on one hand, the more diffusive species
might be able to ignore its competitors long enough and invade the
whole territory while eliminating the competitors slowly. On the other
hand, the more concentrated species \textendash{} that is the less
diffusive one \textendash{} might benefit from the maxim \textquotedblleft unity
is strength\textquotedblright{} and eliminate slowly the dispersed
competitors and, asymptotically, invade the domain. It is well-known
that diffusion tends to bring unexpected results. In any case, if
something can revert the invasion, we expect it to be the competition.
With this in mind, we decide to focus first on the infinite competition
limit which should amplify the effects of competition. 

Many papers limit their study to the case $n=2$ (and so will we) because
then the system becomes monotonic and is therefore
much simpler to study than the general case. We will not use 
the monotonicity explicitly but it will be the underlying mechanism
behind many results.

When $n=2$, the PDE system can be rewritten: 

\[
\left\{ \begin{matrix}\partial_{t}u=d_{1}\Delta_{x}u+u\left(r_{1}-a_{1}u-k_{1}v\right) & \mbox{in} & \Omega\times\left(0,+\infty\right)\\
\partial_{t}v=d_{2}\Delta_{x}v+v\left(r_{2}-a_{2}v-k_{2}u\right) & \mbox{in} & \Omega\times\left(0,+\infty\right)
\end{matrix}\right.
\]

When there is no diffusion at all, this system becomes an ODE system.
Then, the steady state $\left(u,v\right)=\left(0,1\right)$ (resp.
$\left(u,v\right)=\left(1,0\right)$) is stable when $\frac{k_{1}r_{2}}{r_{1}a_{2}}>1$
(resp. $\frac{k_{2}r_{1}}{r_{2}a_{1}}>1$), unstable when $\frac{k_{1}r_{2}}{r_{1}a_{2}}<1$
(resp. $\frac{k_{2}r_{1}}{r_{2}a_{1}}>1$). Our interest lies in the
bistable case and more precisely in the so-called \textquotedblleft weak 
competition case\textquotedblright{} where $\frac{k_{1}r_{2}}{r_{1}a_{2}}$ 
and $\frac{k_{2}r_{1}}{r_{2}a_{1}}$
are larger than $1$ but close to $1$. In the monostable case, only
one species is a \textquotedblleft strong\textquotedblright{} competitor.

The infinite competition limit ($k_{1}\to+\infty$ and $\frac{k_{1}}{k_{2}}$
constant) has been studied by Dancer et al. in 1999 in the case of
bounded domains with Neumann boundary conditions \cite{Dancer-1}
(they also investigated Dirichlet conditions five years later \cite{Dancer-2}).
They obtained a free boundary Stefan problem and, under regularity
assumptions, a spatial segregation with an explicit condition on the
interface. In 2007, Nakashima and Wakasa \cite{Nakashima} studied the generation 
of interfaces for such systems and obtained a similar free boundary condition.

It is worth mentioning that the spatial segregation in multi-dimensional
domains for elliptic PDE yields highly non-trivial issues. It can
be either approached as a free boundary problem (Dancer \cite{Dancer-1},
Quitalo \cite{Quitalo}) or as an optimal partition problem (Conti
\cite{Conti-1,Conti-2}), but in both cases it is really a problem
in itself, which requires additional assumptions on the initial conditions
and a lot of work.

Therefore, our interest goes to unbounded homogeneous domains. Reaction
\textendash{} diffusion studies in such domains usually conjecture
the existence of propagation fronts and, when their existence can
be rigorously proved, derive from them some information on the dynamics
of the system and the long-time steady state. Here, it is important
to recall that the main underlying assumption with propagation fronts
is that, when the initial conditions are well-chosen, the solutions
of the PDE asymptotically \textquotedblleft behave like\textquotedblright{}
the travelling wave solution. We refer to Gardner \cite{Gardner}
for such results for finite $k$. We will not treat this aspect of
the problem in this paper but will indeed investigate travelling wave
solutions.

A straightforward consequence of the travelling wave approach is that
it reduces the multi-dimensional $\Omega\times\mathbb{R}_{+}^{\star}$
to $\mathbb{R}$. The problem becomes one-dimensional, that is an
ODE problem, and thus all the free boundary issues vanish. Our hope
is to find a similar spatial segregation limit, with an explicit condition
on the interface connecting the invasion speed of the travelling
wave to the diffusion rates. We know from Gardner \cite{Gardner}
and Kan-On \cite{Kan-On} that the invasion speed is constant and
bounded by the Fisher \textendash{} KPP\textquoteright s speeds \cite{KPP}
of the species. Can we use the infinite competition limit to derive
its sign and therefore know which species invades the other? Will
unity be strength?

It is important to remark that the invasion speed is not linearly
determined here. Actually, a linearization near $\left(0,1\right)$
or $\left(1,0\right)$ yields no condition at all on the invasion
speed and the linearized speed cannot be defined as usual. 
As far as we know, the linear determinacy for competition \textendash{} 
diffusion systems is useful only with a specific
class of monostable problems (Huang \cite{Huang}, Lewis \cite{Lewis}).

In the next section, we fully expose the problem, enunciate our final
result and recall that the problem is well-posed. The third and main
section is dedicated to a compactness result and the convergence to
a limit problem which is similar in many ways to the one Dancer et
al. obtained. Eventually, the last section explicits the relation
between the speed and the diffusion rates.

\section{Formulation of the problem and main theorem}

In this first section, we present the PDE problem studied in this
article, give its ecological interpretation and enunciate our main
result. We also check quickly that the problem is well-posed.

\subsection{Model}

\subsubsection{Reaction \textendash{} diffusion system}

We first consider the following one-dimensional Lotka \textendash{}
Volterra competition \textendash{} diffusion problem:

\[
\left\{ \begin{matrix}\partial_{t}\mu=d_{1}\partial_{xx}\mu+\mu\left(r_{1}-a_{1}\mu-k_{1}\rho\right) & \mbox{in} & \mathbb{R}\times\left(0,+\infty\right)\\
\partial_{t}\rho=d_{2}\partial_{xx}\rho+\rho\left(r_{2}-a_{2}\rho-k_{2}\mu\right) & \mbox{in} & \mathbb{R}\times\left(0,+\infty\right)
\end{matrix}\right.
\]
 where $d_{1},d_{2},r_{1},r_{2},a_{1},a_{2},k_{1},k_{2}$ are positive
constants with ecological meaning (diffusion rates, intrinsic growth
rates, intraspecific competition rates, interspecific competition
rates). We assume, without loss of generality, that $\frac{k_{2}a_{2}}{r_{2}^{2}}\geq\frac{k_{1}a_{1}}{r_{1}^{2}}$. 

Let $k=\frac{k_{1}r_{2}}{a_{2}r_{1}}>0$, $\alpha=\frac{k_{2}a_{2}r_{1}}{k_{1}a_{1}r_{2}}>0$, 
$d=\frac{d_{2}}{d_{1}}>0$, $r=\frac{r_{2}}{r_{1}}>0$ and 
\[
\left(u_{k},v_{k}\right):\left(x,t\right)\mapsto\left(\frac{a_{1}}{r_{1}}\mu\left(\sqrt{\frac{d_{1}}{r_{1}}}x,\frac{1}{r_{1}}t\right),\frac{a_{2}}{r_{2}}\rho\left(\sqrt{\frac{d_{1}}{r_{1}}}x,\frac{1}{r_{1}}t\right)\right)
\]

We get:
\[
\left\{ \begin{matrix}\partial_{t}u_{k}=\partial_{xx}u_{k}+u_{k}\left(1-u_{k}\right)-ku_{k}v_{k} & \mbox{in} & \mathbb{R}\times\left(0,+\infty\right)\\
\partial_{t}v_{k}=d\partial_{xx}v_{k}+rv_{k}\left(1-v_{k}\right)-\alpha ku_{k}v_{k} & \mbox{in} & \mathbb{R}\times\left(0,+\infty\right)
\end{matrix}\right.
\]

As soon as $k>1$ (which will always be assumed thereafter), $\frac{\alpha k}{r}>1$, that is 
the system is bistable. Indeed, the free assumption 
$\frac{k_{2}a_{2}}{r_{2}^{2}}\geq\frac{k_{1}a_{1}}{r_{1}^{2}}$ we made earlier ensures that 
$\frac{\alpha}{r}\geq1$.

A priori, the parameters $k$, $\alpha$, $d$ and $r$ can take any positive value. 
Let $\mathcal{P}\left(k,\alpha,d,r\right)$ denote this generic PDE problem.
Our interest lies in the limit, as $k\to+\infty$, of the set of problems 
$\left\{\mathcal{P}\left(k,\alpha,d,r\right)\right\}_{k>1}$ 
(associated to a given $(\alpha,d,r)$) (hence the notations $u_k$ and $v_k$).  

Moreover, going back to the initial parameters, this means that we actually consider a larger class of
ecological problems than just $k_1\to+\infty$ and $\frac{k_1}{k_2}$ constant. 
Indeed, the only restrictions are that $\frac{d_2}{d_1}$, $\frac{r_2}{r_1}$ and 
$\frac{k_{2}a_{2}}{k_{1}a_{1}}$ are fixed along the whole class. 
For example, the limit $k\to+\infty$ may correspond to:
\begin{itemize}
    \item $k_2$ proportional (with a fixed constant along the whole class) to $k_1$ and $k_1\to+\infty$ with $a_1$ and $a_2$ fixed (along the whole class);
    \item $k_1\to+\infty$ and $a_1$ proportional to $\frac{1}{k_1}$ with $a_2$ and $k_{2}$ fixed;
    \item $a_2$ proportional to $a_1$ and $a_1\to0$ with $k_1$ and ${k_2}$ fixed.
\end{itemize}

\subsubsection{Travelling wave system}

Searching for a travelling wave of the variable $\xi=x-c_{k}t$, where
$c_{k}\in\mathbb{R}$ is the unknown invasion speed, the problem
rewrites eventually:

\begin{equation}\label{eq:ode_system}
\left\{ \begin{matrix}-u_{k}''-c_{k}u_{k}'=u_{k}\left(1-u_{k}\right)-ku_{k}v_{k} & \mbox{in} & \mathbb{R}\\
-dv_{k}''-c_{k}v_{k}'=rv_{k}\left(1-v_{k}\right)-\alpha ku_{k}v_{k} & \mbox{in} & \mathbb{R}\\
u_{k}\left(-\infty\right)=1,\ u_{k}\left(+\infty\right)=0\\
v_{k}\left(-\infty\right)=0,\ v_{k}\left(+\infty\right)=1\\
u_{k}'<0 & \mbox{in} & \mathbb{R}\\
v_{k}'>0 & \mbox{in} & \mathbb{R}
\end{matrix}\right.
\end{equation}

It is well-known that natural selection tends to differentiate the
niches of competing species. The travelling wave solution corresponds
to the case where $u_{k}$ lives essentially in the left half-space
while $v_{k}$ lives essentially in the right half-space. In such
a situation, it seems obvious that one species might chase the other
and invade the abandoned territory. The whole point of this article
is to determine this species, or equivalently, the sign of the invasion
speed. Indeed, 
\begin{enumerate}
\item $c_{k}>0$ iff $u_{k}$ chases $v_{k}$; 
\item $c_{k}<0$ iff $v_{k}$ chases $u_{k}$;
\end{enumerate}
Of course, we aim to find a result depending on the value of $d$.
Thus in the following pages, when we focus on the dependency of $c_{k}$
on $d$, we write $c_{k,d}$; otherwise, when $d$ is fixed, we simply
write $c_{k}$.

\subsection{\textquotedblleft Unity is not strength\textquotedblright{} theorem }

Our main result follows.
\begin{thm}
$\left(d\mapsto c_{k,d}\right)_{k>1}$ converges locally uniformly
in $\left(0,+\infty\right)$ to a continuous function $d\mapsto c_{\infty,d}$
which satisfies:
\begin{enumerate}
\item $c_{\infty,d}=0$ if $d=\frac{\alpha^{2}}{r}$;
\item $c_{\infty,d}\in\left(0,2\right)$ if $d\in\left(0,\frac{\alpha^{2}}{r}\right)$;
\item $c_{\infty,d}\in\left(-2\sqrt{rd},0\right)$ if $d>\frac{\alpha^{2}}{r}$.
\end{enumerate}
\end{thm}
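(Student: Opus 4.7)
My plan is to combine the compactness/convergence result announced for Section~3 with a direct analysis of the limit free-boundary problem. For fixed $d$ and any sequence $k_n \to +\infty$, Section~3's compactness will yield a subsequential limit $(u_\infty, v_\infty, c_\infty)$, continuous and monotone, with segregated supports $u_\infty v_\infty \equiv 0$. Strict monotonicity of the profiles together with the boundary values at $\pm\infty$ forces a single interface point, which I translate to the origin; on the two half-lines $u_\infty$ and $v_\infty$ decouple into the standard Fisher--KPP half-line problems $-u''-cu' = u(1-u)$ on $(-\infty,0)$ with $u(-\infty)=1,\ u(0)=0,\ u'<0$, and $-dv''-cv' = rv(1-v)$ on $(0,+\infty)$ with $v(0)=0,\ v(+\infty)=1,\ v'>0$. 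A phase-plane analysis at the saddle $(1,0)$ and the equilibrium $(0,0)$ shows that each half-line problem admits a unique solution exactly on $c \in (-\infty, 2)$ and $c \in (-2\sqrt{rd}, +\infty)$ respectively, confining the candidate speed to $(-2\sqrt{rd}, 2)$.

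To select $c_\infty$, I would derive a Rankine--Hugoniot-type matching condition at $x = 0$. Multiplying the first line of \eqref{eq:ode_system} by $\alpha$ and subtracting the second cancels the stiff coupling and yields
\begin{equation*}
-(\alpha u_k - d v_k)'' - c_k(\alpha u_k - v_k)' = \alpha u_k(1-u_k) - r v_k(1 - v_k),
\end{equation*}
which passes to the distributional limit and, upon integration on a shrinking interval $(-\varepsilon, \varepsilon)$ with $\varepsilon \to 0$, produces
\begin{equation*}
\alpha u_\infty'(0^-) + d v_\infty'(0^+) = 0.
\end{equation*}
Writing $p(c) := -u_\infty'(0^-) > 0$ and $q(c) := v_\infty'(0^+) > 0$, this is $\alpha p(c_\infty) = d q(c_\infty)$. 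Multiplying each half-line ODE by the corresponding derivative and integrating yields the energy identities
\begin{equation*}
p(c)^2 = \tfrac{1}{3} - 2c \int_{-\infty}^0 (u_\infty')^2\,dx, \qquad q(c)^2 = \tfrac{r}{3d} + \tfrac{2c}{d} \int_0^{+\infty} (v_\infty')^2\,dx,
\end{equation*}
from which $c \mapsto p(c)$ is strictly decreasing and $c \mapsto q(c)$ is strictly increasing, so $c \mapsto \alpha p(c) - d q(c)$ is strictly monotone on $(-2\sqrt{rd}, 2)$ and admits a unique zero $c_\infty$ there.

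The three cases of the theorem then follow by evaluating at $c = 0$: $p(0) = 1/\sqrt{3}$ and $q(0) = \sqrt{r/(3d)}$ give $\alpha p(0) - d q(0) = (\alpha - \sqrt{rd})/\sqrt{3}$, which vanishes iff $d = \alpha^2/r$ and otherwise has the sign that forces $c_\infty$ into the expected subinterval of $(-2\sqrt{rd}, 2)$. Uniqueness of $c_\infty$ upgrades subsequential convergence to convergence of the entire family; continuity of $d \mapsto c_{\infty, d}$ follows from the $C^1$ dependence of $p$ and $q$ on $(c, d)$ and the implicit function theorem applied to $\alpha p(c, d) - d q(c, d) = 0$; local uniform convergence in $d$ on compacts of $(0, +\infty)$ then follows from the uniform (in $d$) compactness provided by Section~3. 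The principal technical obstacle will be the passage to the limit near the interface: one must exclude the a~priori possibility of a gap between the supports of $u_\infty$ and $v_\infty$, and one must obtain strong $C^1$ convergence of $u_k$ and $v_k$ up to the interface from their respective sides, so that the jump condition really involves the one-sided derivatives of $u_\infty$ and $v_\infty$ and not some residual contribution from an unresolved transition layer.
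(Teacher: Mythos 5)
Your overall architecture coincides with the paper's: compactness and segregation ($uv=0$), decoupling into two half-line Fisher--KPP problems, the interface relation $\alpha u'(0^-)+dv'(0^+)=0$, and a monotonicity-in-$c$ argument to locate and isolate the root of the matching condition. The genuine gap is the step where you claim that the energy identities $p(c)^2=\tfrac13-2c\int_{-\infty}^0(u')^2$ and $q(c)^2=\tfrac r{3d}+\tfrac{2c}{d}\int_0^{+\infty}(v')^2$ imply that $c\mapsto p(c)$ is strictly decreasing and $c\mapsto q(c)$ strictly increasing. They do not: the integrals $\int(u_c')^2$ and $\int(v_c')^2$ depend on $c$ through the profiles themselves, so for $0<c_1<c_2$ nothing in these identities prevents $c_1\int(u_{c_1}')^2>c_2\int(u_{c_2}')^2$. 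What the identities do give is the comparison with the value at $c=0$ (namely $p(c)<1/\sqrt3$ for $c>0$ and $p(c)>1/\sqrt3$ for $c<0$, and the reverse for $q$), and that is in fact already enough to derive the sign trichotomy for \emph{any} root of $\alpha p(c)=dq(c)$ --- a nice shortcut. But it is not enough for uniqueness of that root, which you need in order to upgrade subsequential convergence of $(c_k)$ to convergence of the whole family; and your appeal to the implicit function theorem for the continuity of $d\mapsto c_{\infty,d}$ presupposes $C^1$ dependence of $p$ and $q$ on $(c,d)$, which you have not established. The paper instead gets strict monotonicity and continuity of $c\mapsto y_c'(0)$ for the half-line problem from Du and Lin's Proposition~4.1 (after repairing their construction of the subsolution via the principal eigenvalue $\lambda_1=-1+c^2/4+\pi^2/\ell^2$ of the linearized Dirichlet problem on $(0,\ell)$), and proves continuity in $d$ by rerunning the compactness argument on the limit problem rather than by implicit differentiation. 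You must either import that monotonicity result or supply a sliding/comparison proof of it.

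A second, smaller gap: your assertion that the half-line problems admit solutions ``exactly'' on $c<2$ and $c>-2\sqrt{rd}$ is doing real work at the endpoints, since Gardner and Kan-On only give a priori that the limit speed lies in the closed interval $[-2\sqrt{rd},2]$. The paper excludes $c=-2\sqrt{rd}$ (and symmetrically $c=2$) by a separate maximum-principle comparison of the limit profile with the critical Fisher--KPP wave of the same speed, normalized to agree at one point; some such argument is required, because the half-line existence theorem you invoke is silent about non-existence at the critical speed. You correctly flag the remaining technical issues (no gap between the supports, one-sided $C^1$ control up to the interface), which the paper resolves via the $\mathcal C^1$ regularity of $\alpha u-dv$ across the origin and a Cauchy--Lipschitz argument.
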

\begin{rem}
This result is profoundly unexpected! It does not suffice to compare
$d$ to $1$ or $\alpha$ to $1$. $v$ can loose even if $r$ is
large and $u$ can loose even if $\alpha$ is large, for example.
This should yield interesting insight into ecological applications.
\end{rem}

\subsection{Well-posedness and regularity of the problem}
\begin{thm}
For any $k>1$, there exists a unique $c_{k}$ such that there exist
solutions $u_{k}$ and $v_{k}$ of the problem (\ref{eq:ode_system}). It is enforced that $c_{k}\in\left(-2\sqrt{rd},2\right)$,
$u_{k}\in\mathcal{C}^{\infty}\left(\mathbb{R}\right)$ and $v_{k}\in\mathcal{C}^{\infty}\left(\mathbb{R}\right)$.
We can moreover assume exactly one of the following normalization
hypotheses:
\begin{itemize}
\item $u_{k}\left(0\right)=v_{k}\left(0\right)$,
\item $u_{k}\left(0\right)=\frac{1}{2}$,
\item $v_{k}\left(0\right)=\frac{1}{2}$,
\end{itemize}
and if we do so, $u_{k}$ and $v_{k}$ are unique.\end{thm}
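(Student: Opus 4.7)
This statement is essentially classical for bistable monotone systems, and the plan is to extract it from the works of Gardner \cite{Gardner} and Kan-On \cite{Kan-On} rather than reproving everything from scratch. The starting observation is that the change of variable $w_k = 1 - v_k$ turns (\ref{eq:ode_system}) into a quasi-monotone (cooperative) bistable elliptic system on $\mathbb{R}$ with stable end-states $(0,0)$ and $(1,1)$, so that all the standard theory of bistable monotone travelling fronts applies.

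For the existence of a speed $c_k$ and of a monotone profile $(u_k, v_k)$, my plan is the classical two-step one. For each candidate speed $c$, solve a truncated boundary value problem on $[-L, L]$ with the appropriate Dirichlet data, obtaining a monotone solution thanks to the cooperative structure and to a sub/supersolution construction; then select the correct $c = c_L$ by imposing a normalization such as $u(0) = 1/2$ and applying a topological-degree argument in the unknowns $(c, u, v)$; finally pass to the limit $L \to +\infty$ using interior elliptic regularity together with the strict monotonicity of the profiles, which prevents degeneration of the limit to a constant. Uniqueness of $c_k$ then follows from the sliding method: given two admissible pairs, slide one through the other until they first touch and conclude via the strong maximum principle and the bistable structure. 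Kan-On carries out exactly this programme for the system at hand.

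The strict speed bounds $c_k \in (-2\sqrt{rd}, 2)$ come from comparison with the scalar Fisher-KPP fronts of $u$ alone (minimal speed $2$) and of $v$ alone (minimal speed $2\sqrt{rd}$): since $ku_k v_k \geq 0$ and $\alpha k u_k v_k \geq 0$ and neither vanishes identically, $u_k$ is a nontrivial strict subsolution of the corresponding KPP equation, and symmetrically for $v_k$. A sliding argument against translates of the scalar KPP waves, combined with the strong maximum principle, rules out $c_k \geq 2$ and $c_k \leq -2\sqrt{rd}$. Once existence is established, smoothness is immediate by bootstrap on (\ref{eq:ode_system}): the bounds $u_k, v_k \in L^\infty(\mathbb{R})$ give $C^2$ regularity directly from the ODEs, which is iterated to yield $C^\infty$.

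Finally, translation invariance of (\ref{eq:ode_system}) leaves one degree of freedom that the normalization fixes. The strict monotonicity of $u_k$ and $v_k$ ensures that each of $\xi \mapsto u_k(\xi) - v_k(\xi)$, $\xi \mapsto u_k(\xi) - \tfrac{1}{2}$ and $\xi \mapsto v_k(\xi) - \tfrac{1}{2}$ is continuous and strictly monotone between limits of opposite signs, so that each of the three conditions is met at a unique point, hence by a unique translate. The main obstacle in this plan is the existence step: the bistable structure forbids a variational formulation and the four-dimensional phase space forbids a phase-plane analysis, forcing reliance on the topological-degree machinery developed by Gardner and adapted by Kan-On to competitive two-species systems. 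I would cite their results directly rather than reprove them.
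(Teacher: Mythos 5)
Your proposal is correct and follows essentially the same route as the paper: both defer the existence, uniqueness of the speed, and the strict bounds $c_k\in\left(-2\sqrt{rd},2\right)$ to Gardner and Kan-On (the paper notes it is Kan-On who gives the open interval), and both obtain $\mathcal{C}^{\infty}$ regularity by a routine bootstrap, with the normalization fixed by strict monotonicity of the profiles. The extra detail you give on the degree-theoretic and sliding machinery is a faithful sketch of what those references do, but it is not a different argument.
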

\begin{proof}
The well-posedness and the bounds for $c_{k}$ are proven by Gardner
in \cite{Gardner} and also by Kan-On in \cite{Kan-On} (actually,
Gardner only showed $c_{k}\in\left[-2\sqrt{rd},2\right]$ but Kan-On
showed indeed $c_{k}\in\left(-2\sqrt{rd},2\right)$ which will be
important in the end). It is worth mentioning that their papers actually
proved that the problem is well-posed without any monotonicity 
condition and that the monotonicity is indeed enforced.

Since $u_{k},v_{k}\in L^{\infty}\left(\mathbb{R}\right)$
and $u_{k}',v_{k}'\in L^{1}\left(\mathbb{R}\right)$, the regularity
just follows from $W^{k,p}$-estimates and Sobolev\textquoteright s
injections.\end{proof}
\begin{rem}
The extremal speeds $-2\sqrt{rd}$ and $2$ are the invasion speeds
of respectively $v_{k}$ when $u_{k}=0$ and $u_{k}$ when $v_{k}=0$.
This is a well-known result from Fisher, Kolmogorov, Petrovsky and
Piscounov \cite{KPP}.
\end{rem}

\section{Limit problem}

Here we show that $\left(u_{k}\right)$, $\left(v_{k}\right)$ and
$\left(c_{k}\right)$ converge when $k\to+\infty$ and formulate the
limit problem.

\subsection{Existence of limit points}

First, $\left(c_{k}\right)$ is relatively compact and therefore,
by the Bolzano \textendash{} Weierstrass theorem, has a limit point
$c\in\left[-2\sqrt{rd},2\right]$.

If $c\leq0$, we fix for any $k>1$ the normalization $u_{k}\left(0\right)=\frac{1}{2}$.
On the contrary, if $c>0$, we fix for any $k>1$ $v_{k}\left(0\right)=\left(\frac{1}{2}\right)$.
This choice will be explained later on. In either case, this implies
that the functions $k\mapsto u_{k}$ and $k\mapsto v_{k}$ are well-defined. 
\begin{prop}
For any $i\geq1$, let $K_{i}=\left[-i,i\right]$. $\left(u_{k}\right)$
and $\left(v_{k}\right)$ are relatively compact in $\mathcal{C}\left(K_{i}\right)$.\end{prop}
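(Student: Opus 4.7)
The difficulty is that the individual ODEs contain the term $ku_k v_k$, which has no $k$-uniform bound; any direct $W^{2,p}$ estimate on $u_k$ or $v_k$ alone is therefore hopeless. The key observation---essentially the whole point of the argument---is that the two equations of (\ref{eq:ode_system}) possess a linear combination in which the singular reaction term exactly cancels. Once this combination is identified, I would derive uniform Sobolev regularity for it and transfer the regularity back to $u_k$ and $v_k$ individually via Helly's selection principle and the opposite monotonicities. Everything else is a standard exercise on monotone sequences.

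\textbf{Step 1 (cancellation and Lipschitz bound).} Multiplying the first equation of (\ref{eq:ode_system}) by $\alpha$ and subtracting the second, the $\alpha k u_k v_k$ terms cancel and one obtains
\[
(\alpha u_k - d v_k)'' + c_k(\alpha u_k' - v_k') = -\alpha u_k(1-u_k) + r v_k(1-v_k).
\]
The right-hand side is uniformly bounded by $\alpha + r$ in $L^\infty(\mathbb{R})$. Since $u_k' \le 0$, $v_k' \ge 0$ and the limits at $\pm\infty$ are prescribed, $\|\alpha u_k' - v_k'\|_{L^1(\mathbb{R})} = \alpha + 1$ and $\|(\alpha u_k - d v_k)'\|_{L^1(\mathbb{R})} = \alpha + d$. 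Combined with the $k$-uniform bound $|c_k| < 2\sqrt{rd} \vee 2$, these estimates make $(\alpha u_k - d v_k)''$ uniformly bounded in $L^1_{\mathrm{loc}}(\mathbb{R})$. A standard one-dimensional mean-value argument---pick a point in $K_i$ where $(\alpha u_k - d v_k)'$ is comparable to its $K_i$-average, then integrate $(\alpha u_k - d v_k)''$---yields a uniform $L^\infty(K_i)$ bound on $(\alpha u_k - d v_k)'$, so that $(\alpha u_k - d v_k)$ is uniformly Lipschitz on every $K_i$.

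\textbf{Step 2 (Helly, continuity of the limits, Dini upgrade).} Since $(u_k)$ is decreasing and $(v_k)$ is increasing with values in $[0,1]$, Helly's selection theorem provides a subsequence along which $u_k \to u_\infty$ and $v_k \to v_\infty$ pointwise on $\mathbb{R}$, with $u_\infty$ decreasing and $v_\infty$ increasing. The uniform Lipschitz bound on $\alpha u_k - d v_k$ passes to the pointwise limit, so $\alpha u_\infty - d v_\infty$ is Lipschitz---hence continuous---on every $K_i$. If $u_\infty$ had a jump of size $a > 0$ at some $x_0$, monotonicity of $v_\infty$ would yield $b := v_\infty(x_0^+) - v_\infty(x_0^-) \ge 0$, and continuity of $\alpha u_\infty - d v_\infty$ at $x_0$ would force $\alpha a + d b = 0$: impossible. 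Hence $u_\infty$ and, by the same argument, $v_\infty$ are continuous on $\mathbb{R}$. The classical fact that pointwise convergence of monotone functions to a continuous limit is uniform on compact sets then upgrades $u_k \to u_\infty$ and $v_k \to v_\infty$ to uniform convergence on every $K_i$, which is exactly the claimed relative compactness in $\mathcal{C}(K_i)$.
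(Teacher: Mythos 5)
Your proof is correct, but it takes a genuinely different route from the paper's. The paper obtains equicontinuity from a local energy estimate: it multiplies the equation for $u_{k}$ by $u_{k}\chi$ with $\chi$ a cutoff, observes that the dangerous term contributes $-\int u_{k}^{2}\left(u_{k}+kv_{k}\right)\chi\leq0$ and can simply be dropped, and deduces a $k$-uniform bound on $\|u_{k}'\|_{L^{2}\left(K_{i}\right)}$, hence uniform H\"older-$\frac{1}{2}$ equicontinuity and Ascoli; the same is done for $v_{k}$ separately. So the paper kills the singular term by a \emph{sign} argument, while you kill it by the exact \emph{cancellation} in $\alpha u_{k}-dv_{k}$ --- the combination the paper only introduces later, to derive the limit equation and the free-boundary relation $\alpha u'\left(0\right)=-dv'\left(0\right)$ --- and you then transfer compactness back to $u_{k}$ and $v_{k}$ individually via Helly's theorem, the jump-cancellation argument, and P\'olya/Dini. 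Both arguments are complete: your $L^{1}$ norms of $u_{k}'$ and $v_{k}'$ are correctly computed from the prescribed limits and signs, the mean-value step does give a uniform local Lipschitz bound on $\alpha u_{k}-dv_{k}$, and the observation that a jump $a>0$ of $u_{\infty}$ would force $\alpha a+db=0$ with $b\geq0$ is exactly right. The trade-off is this: your version leans essentially on the strict monotonicity $u_{k}'<0<v_{k}'$ and on the one-dimensional setting (Helly, Dini), whereas the paper's energy estimate uses only $0\leq u_{k},v_{k}\leq1$ and the sign of the competition term, and would survive without monotonicity or in higher dimensions. In exchange, your argument yields slightly more than needed --- a uniform Lipschitz modulus for the combination rather than a H\"older-$\frac{1}{2}$ modulus for each density --- and it makes the continuity of the limits and the special role of $\alpha u-dv$ appear already at the compactness stage, anticipating several of the paper's subsequent lemmas.
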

\begin{proof}
Our aim here is to use Ascoli\textquoteright s theorem. To that end,
let us show that each $u_{k}$ is Hölder-continuous with a constant
independent of $k$.

There exists a positive function $\chi\in\mathcal{D}\left(\mathbb{R}\right)$
such that $\chi\left(x\right)=0$ if $x\notin\left[-i-1,i+1\right]$
and $\chi\left(x\right)=1$ if $x\in\left[-i,i\right]$.

For any $k>1$, if we multiply the equation defining $u_{k}$ by $u_{k}\chi$
and then integrate, we get:
\[
\int\left(-u_{k}''u_{k}\chi-c_{k}u_{k}'u_{k}\chi\right)=\int u_{k}^{2}\chi-\int u_{k}^{2}\left(u_{k}+kv_{k}\right)\chi
\]

The third term is obviously negative. An integration by parts yields:
\[
\int u_{k}'^{2}\chi-\int\frac{u_{k}^{2}}{2}\chi''+c_{k}\int\frac{u_{k}^{2}}{2}\chi'\leq\int u_{k}^{2}\chi
\]

Finally, since $\int u_{k}'^{2}\chi\geq\int_{-i}^{i}u_{k}'^{2}$ and
$\|u_{k}\|_{L^{\infty}}\leq1$, we have:
\[
\|u_{k}'\|_{L^{2}\left(K_{i}\right)}^{2}\leq\int\left(\chi+\frac{\left|c_{k}\right|}{2}\left|\chi'\right|+\frac{1}{2}\left|\chi''\right|\right)
\]

Then we use Ascoli\textquoteright s theorem: the family $\left(u_{k}\right)$
is bounded in $L^{\infty}\left(K_{i}\right)$ and uniformly equicontinuous
in $K_{i}$ therefore it is relatively compact in $\mathcal{C}\left(K_{i}\right)$.
The exact same proof works for $\left(v_{k}\right)$.
\end{proof}
It is now clear, by a standard diagonal extraction argument, that
there exists a subsequence of $\left(u_{k}\right)$ (resp. $\left(v_{k}\right)$)
which converges locally uniformly to a limit point $u$ (resp. $v$).

\subsection{Properties of the limit points}

$c$, $u$ and $v$ are actually unique and true limits as it will
be proven later on. For the moment, let us just consider extracted
convergent subsequences, still denoted $\left(c_{k}\right)$, $\left(u_{k}\right)$
and $\left(v_{k}\right)$.
\begin{lem}
$uv=0$.\end{lem}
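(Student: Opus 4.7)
The plan is to exploit the structure of the first equation in~\eqref{eq:ode_system}: although the interaction term $ku_kv_k$ carries a diverging coefficient, the equation itself forces the integral of $u_kv_k$ against any compactly supported test function to be of order $1/k$, so the product $uv$ must vanish in the limit.

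Concretely, I would fix a nonnegative test function $\varphi \in \mathcal{D}(\mathbb{R})$, multiply the first equation of~\eqref{eq:ode_system} by $\varphi$, and integrate by parts twice to shift all derivatives onto $\varphi$:
\begin{equation*}
-\int u_k \varphi'' + c_k \int u_k \varphi' = \int u_k(1-u_k)\,\varphi - k \int u_k v_k\, \varphi.
\end{equation*}
Since $0 \leq u_k \leq 1$ (from $u_k(-\infty)=1$, $u_k(+\infty)=0$ and $u_k'<0$), $c_k \in (-2\sqrt{rd},2)$, and $\varphi,\varphi',\varphi''$ are fixed and compactly supported, every term except $k\int u_k v_k\varphi$ is bounded uniformly in $k$ by some constant $C_\varphi$. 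Using $u_kv_k\geq 0$ and $\varphi\geq 0$, rearranging yields
\begin{equation*}
0 \leq \int u_k v_k\, \varphi \leq \frac{C_\varphi}{k}.
\end{equation*}

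It then remains to pass to the limit. By the previous proposition, the extracted subsequences converge locally uniformly, so $u_k v_k \to uv$ uniformly on $\mathrm{supp}\,\varphi$; hence $\int uv\,\varphi = \lim_{k\to+\infty}\int u_k v_k\,\varphi = 0$. Since this holds for every nonnegative $\varphi \in \mathcal{D}(\mathbb{R})$ and $uv$ is continuous and nonnegative (as the local uniform limit of smooth nonnegative functions), we conclude $uv \equiv 0$ on $\mathbb{R}$.

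The main obstacle here is really just conceptual rather than technical: one has to notice that the singular factor $k$ in front of $u_kv_k$ can be controlled by duality against an arbitrary test function, once one uses the $L^\infty$-bounds on $u_k$ and $c_k$ already established. No refined decay estimates at infinity, nor any monotonicity argument, are needed at this stage; the qualitative information $k u_k v_k = O(1)$ locally is an immediate by-product of the equation itself.
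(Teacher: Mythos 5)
Your proof is correct and follows essentially the same route as the paper: multiply the $u_k$-equation by a compactly supported test function, integrate by parts to shift derivatives onto the test function, use $0\leq u_k\leq 1$ and the uniform bound on $c_k$ to get $k\left|\int u_k v_k\varphi\right|\leq C_\varphi$, and conclude via the locally uniform convergence of $u_kv_k$ to $uv$. The only cosmetic difference is that you restrict to nonnegative $\varphi$ and invoke the sign of $u_kv_k$, whereas the paper works with arbitrary $\varphi$ and absolute values; both are equally valid.
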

\begin{proof}
Multiplying by a test function $\varphi\in\mathcal{D}\left(\mathbb{R}\right)$
and integrating the equation for $u_{k}$ yields: 
\begin{eqnarray*}
k\left|\int u_{k}v_{k}\varphi\right| & \leq & \int u_{k}\left(1-u_{k}\right)\left|\varphi\right|+\left|c_{k}\right|\int u_{k}\left|\varphi'\right|+\int u_{k}\left|\varphi''\right|\\
 & \leq & C\|\varphi\|_{W^{2,1}\left(\mathbb{R}\right)}
\end{eqnarray*}

Hence $u_{k}v_{k}\to0$ in $\mathcal{D}'\left(\mathbb{R}\right)$. 

Since $u_{k}v_{k}\to uv$ locally uniformly, we get indeed $uv=0$.\end{proof}
\begin{rem}
This kind of result is usually referred to as a segregation property.
There is a lot of similar results in the literature.
\end{rem}
\begin{lem}
We have 
\[
-\alpha u''+dv''-\alpha cu'+cv'=\alpha u\left(1-u\right)-rv\left(1-v\right)
\]
 in $\mathcal{D}'\left(\mathbb{R}\right)$.\end{lem}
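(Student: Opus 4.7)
The strategy is to find a linear combination of the two equations in (\ref{eq:ode_system}) that kills the singular term $k u_k v_k$, then pass to the distributional limit using only the local uniform convergence already established.

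First I would multiply the first equation by $\alpha$ and subtract the second, so that the terms $\alpha k u_k v_k$ and $\alpha k u_k v_k$ cancel exactly. This gives the $k$-free identity
\[
-\alpha u_k'' + d v_k'' - \alpha c_k u_k' + c_k v_k' = \alpha u_k(1-u_k) - r v_k(1-v_k)
\]
pointwise on $\mathbb{R}$. This is the only nontrivial observation; everything else is routine.

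Next I would test against an arbitrary $\varphi\in\mathcal{D}(\mathbb{R})$ and integrate by parts to move all derivatives onto $\varphi$:
\[
\int \left( -\alpha u_k \varphi'' + d v_k \varphi'' + \alpha c_k u_k \varphi' - c_k v_k \varphi' \right)
= \int \left( \alpha u_k(1-u_k) - r v_k(1-v_k) \right)\varphi.
\]
Both sides are now expressed in terms of $u_k$ and $v_k$ themselves (no derivatives of $u_k$ or $v_k$), so passing to the limit is straightforward.

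Finally, I pass to the limit along the extracted subsequences. Since $\operatorname{supp}\varphi$ is compact, the local uniform convergences $u_k\to u$ and $v_k\to v$ upgrade to uniform convergence on $\operatorname{supp}\varphi$; combined with $c_k\to c$ and the uniform bounds $\|u_k\|_\infty,\|v_k\|_\infty\leq 1$, dominated convergence yields convergence of every integral to the analogous expression in $u$, $v$ and $c$. Reinterpreting the limiting identity distributionally gives exactly
\[
-\alpha u'' + d v'' - \alpha c u' + c v' = \alpha u(1-u) - r v(1-v)
\]
in $\mathcal{D}'(\mathbb{R})$. There is no real obstacle here: the only substantive point is the algebraic cancellation of the $k u_k v_k$ terms, which is precisely why the combination $\alpha\cdot(\text{eq}_1)-(\text{eq}_2)$ is chosen.
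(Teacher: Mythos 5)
Your proof is correct and follows exactly the paper's approach: form the combination $\alpha\cdot(\text{eq}_1)-(\text{eq}_2)$ to cancel the $\alpha k u_k v_k$ terms, then pass to the limit in $\mathcal{D}'(\mathbb{R})$ using the locally uniform convergence and dominated convergence. You have simply spelled out the integration by parts and the limit passage that the paper leaves implicit.
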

\begin{proof}
Multiply the equation for $u_{k}$ by $\alpha$ and substract to it
the one for $v_{k}$. The left-handside converges trivially in $\mathcal{D}'\left(\mathbb{R}\right)$.
The right-handside converges by dominated convergence.\end{proof}
\begin{lem}
$u,v\in C\left(\mathbb{R}\right)$ and $\alpha u-dv\in\mathcal{C}^{1}\left(\mathbb{R}\right)$.\end{lem}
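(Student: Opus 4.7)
The continuity of $u$ and $v$ is immediate: both are local-uniform limits of the smooth sequences $(u_k)$ and $(v_k)$, so they inherit continuity on every compact interval, hence on all of $\mathbb{R}$. In particular the source term
\[
f := \alpha u(1-u) - r v(1-v)
\]
on the right-hand side of the identity of Lemma 3.3 is bounded and continuous.

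To obtain the $\mathcal{C}^1$-regularity of $w := \alpha u - dv$, my plan is first to upgrade the compactness information already used in the proof of Proposition~3.1, where $u_k'$ and $v_k'$ are shown to be bounded in $L^2(K_i)$ uniformly in $k$ for every $K_i=[-i,i]$. Combining this uniform bound with the local-uniform convergence $u_k\to u$, $v_k\to v$ and the uniqueness of distributional limits, one identifies $u',v'\in L^2_{\mathrm{loc}}(\mathbb{R})$ as weak-$L^2$ limits on every $K_i$; in particular $w' = \alpha u' - dv' \in L^2_{\mathrm{loc}}(\mathbb{R})$. Rearranging the distributional identity of Lemma~3.3 in the form
\[
w'' = -\alpha c\, u' + c v' - f \qquad\text{in } \mathcal{D}'(\mathbb{R})
\]
then places $w''$ in $L^2_{\mathrm{loc}}(\mathbb{R})$ as well, so $w\in H^2_{\mathrm{loc}}(\mathbb{R})$. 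The one-dimensional Sobolev--Morrey embedding $H^2_{\mathrm{loc}}(\mathbb{R})\hookrightarrow \mathcal{C}^{1,1/2}_{\mathrm{loc}}(\mathbb{R})$ then finishes the argument and gives $w \in \mathcal{C}^1(\mathbb{R})$.

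The (mild) difficulty is that the first-order terms $-\alpha c u' + c v'$ appearing in the equation are \emph{not} proportional to $w' = \alpha u' - dv'$ unless $d=1$, so one cannot hope to reduce Lemma~3.3 to a closed first-order distributional ODE in the single unknown $w$ and conclude, for instance, by an integrating-factor argument on a continuous right-hand side. The separate $L^2_{\mathrm{loc}}$-regularity of $u'$ and $v'$, inherited from the uniform energy estimate already present in the excerpt, is precisely what circumvents this obstruction; once it is in place, the rest is a routine Sobolev bootstrap.
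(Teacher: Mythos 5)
Your argument is correct, but it takes a genuinely different route from the paper. The paper integrates the distributional identity of the previous lemma once: since the first-order terms $-\alpha c u'+cv'$ are exact derivatives of the continuous function $-\alpha c u+cv$, one integration turns the whole right-hand side into the continuous function $\alpha cu-cv+I_{a}\bigl(\alpha u(1-u)-rv(1-v)\bigr)$ up to constants, so $-\alpha u'+dv'$ is itself continuous --- no information on $u'$ and $v'$ separately is needed. This also shows that the ``obstruction'' you describe (that $-\alpha c u'+cv'$ is not proportional to $w'$) is not a real one: there is no need to close the equation in $w$ alone, because those first-order terms integrate in closed form regardless. Your alternative --- extracting $u',v'\in L^{2}_{\mathrm{loc}}$ as weak limits from the uniform bound $\|u_k'\|_{L^2(K_i)}\leq C_i$ established in the compactness proposition, placing $w''\in L^{2}_{\mathrm{loc}}$ via the distributional equation, and invoking the one-dimensional embedding $H^{2}_{\mathrm{loc}}\hookrightarrow\mathcal{C}^{1,1/2}_{\mathrm{loc}}$ --- is sound (the identification of the weak $L^2$ limit with the distributional derivative is standard), and it even yields slightly more, namely $w\in\mathcal{C}^{1,1/2}_{\mathrm{loc}}$ and $u',v'\in L^2_{\mathrm{loc}}$. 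The price is that it imports machinery (weak compactness, Sobolev embedding) and an estimate external to the statement of the compactness proposition, where the paper's one-line integration is self-contained given the distributional identity.
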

\begin{proof}
The continuity of $u$ and $v$ is immediate thanks to the continuity
of each $u_{k}$ and $v_{k}$ and the locally uniform convergence.

Let $a,b\in\mathbb{R}$ such that $a<b$ and $I_{a}:C\left(\left[a,b\right]\right)\to C\left(\left[a,b\right]\right)$
defined by $I_{a}\left(f\right):x\mapsto\int_{a}^{x}f$. By continuity
of $u$ and $v$, it is quite obvious that the function 
\[
\alpha cu-cv+I_{a}\left(\alpha u\left(1-u\right)-rv\left(1-v\right)\right)-\left(\alpha cu\left(a\right)-cv\left(a\right)\right)
\]
 is continuous. But, thanks to the previous lemma, it is also equal
in $\mathcal{D}'\left(\left(a,b\right)\right)$ to $-\alpha u'+dv'$
up to an additive constant. Therefore $-\alpha u'+dv'$ is a well-defined
function of $\mathcal{C}\left(\left[a,b\right]\right)$.\end{proof}
\begin{lem}
$u$ and $v$ have finite limits at $\pm\infty$. Besides, 
\[
0\leq\lim_{+\infty}u\leq\lim_{-\infty}u\leq1
\]
 and 
\[
0\leq\lim_{-\infty}v\leq\lim_{+\infty}v\leq1
\]
\end{lem}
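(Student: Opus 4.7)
The plan is to exploit the strict monotonicity of each approximation $u_k$ and $v_k$, which is given as part of the problem $(\ref{eq:ode_system})$, and transfer it to the limits $u$ and $v$ by locally uniform convergence; once monotonicity is established, boundedness forces the existence of limits at $\pm\infty$.

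First I would note that for every $k>1$ one has $0<u_k<1$ and $0<v_k<1$ on all of $\mathbb{R}$: indeed, $u_k$ is strictly decreasing with $u_k(-\infty)=1$, $u_k(+\infty)=0$, and symmetrically for $v_k$. Passing to the pointwise limit gives $0\leq u\leq 1$ and $0\leq v\leq 1$ on $\mathbb{R}$.

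Next, I would transfer the monotonicity. Fix $x<y$ in $\mathbb{R}$. Since $u_k'<0$, we have $u_k(x)\geq u_k(y)$ for every $k$. By the locally uniform convergence, $u_k(x)\to u(x)$ and $u_k(y)\to u(y)$, so $u(x)\geq u(y)$. Hence $u$ is non-increasing on $\mathbb{R}$. Exactly the same argument, using $v_k'>0$, shows that $v$ is non-decreasing on $\mathbb{R}$.

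Finally, a bounded monotone function on $\mathbb{R}$ always admits finite limits at $\pm\infty$. Since $u$ is non-increasing and takes values in $[0,1]$, the limits
\[
\lim_{x\to-\infty} u(x)=\sup_{x\in\mathbb{R}} u(x)\quad\text{and}\quad\lim_{x\to+\infty} u(x)=\inf_{x\in\mathbb{R}} u(x)
\]
exist and satisfy $0\leq\lim_{+\infty} u\leq\lim_{-\infty} u\leq 1$. The analogous statement for $v$, non-decreasing with values in $[0,1]$, gives $0\leq\lim_{-\infty} v\leq\lim_{+\infty} v\leq 1$. There is no real obstacle here: the whole content of the lemma is that monotonicity survives the locally uniform limit, which is elementary once one remembers that $u_k'<0$ and $v_k'>0$ are imposed in $(\ref{eq:ode_system})$; the bounds and the existence of limits are then immediate consequences.
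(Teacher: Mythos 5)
Your proposal is correct and matches the paper's own (very terse) proof: both transfer the monotonicity $u_k'<0$, $v_k'>0$ and the bounds $0\leq u_k,v_k\leq1$ to the limits via locally uniform convergence, and then invoke the existence of limits for bounded monotone functions. You have simply spelled out the steps that the paper leaves implicit.
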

\begin{proof}
By locally uniform convergence, $u$ and $v$ are monotone, respectively
non-increasing and non-decreasing, and satisfy $0\leq u,v\leq1$. \end{proof}
\begin{lem}
$u$ and $v$ cannot vanish simultaneously on a non-empty compact
set.\end{lem}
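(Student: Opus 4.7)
My plan is a proof by contradiction. Assume there exist $a<b$ with $u\equiv v\equiv 0$ on $[a,b]$. By monotonicity of $u$ and $v$ together with their continuity, setting
\[
a^* := \inf\{x : u(x) = 0\}, \qquad b^* := \sup\{x : v(x) = 0\},
\]
one has $a^*\leq a<b\leq b^*$, $u>0$ on $(-\infty,a^*)$, $u\equiv 0$ on $[a^*,+\infty)$, $v\equiv 0$ on $(-\infty,b^*]$, and $v>0$ on $(b^*,+\infty)$. The goal is to contradict $u>0$ on $(-\infty,a^*)$.

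The first and main step is to pass to the limit in the $u_k$-equation on the ``$u$-zone'' $(-\infty,a^*)$, aiming at the Fisher--KPP ODE $-u''-cu'=u(1-u)$. All terms pass to the limit trivially by locally uniform convergence except the interaction $-ku_kv_k$, whose divergent prefactor $k$ is the main obstacle. The key trick is to use the $v_k$-equation, which yields the identity $\alpha k u_k v_k = d v_k'' + c_k v_k' + r v_k(1-v_k)$. For any $\chi\in\mathcal{D}((-\infty,a^*))$, pairing this identity with $\chi$ and integrating by parts twice transfers all derivatives onto $\chi$, so that $\alpha k\int u_kv_k\chi$ reduces to an integral of $v_k$ against a uniformly bounded, compactly supported function. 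Since $\operatorname{supp}\chi\subset(-\infty,a^*)\subset(-\infty,b^*)$ where $v\equiv 0$, locally uniform convergence forces $v_k\to 0$ uniformly on $\operatorname{supp}\chi$, whence $k\int u_kv_k\chi\to 0$. The distributional limit of the $u_k$-equation on $(-\infty,a^*)$ is then the Fisher--KPP ODE; a standard bootstrap from continuity of $u$ makes it classical.

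The second step is to obtain the Cauchy data $u(a^*)=u'(a^*)=0$. Continuity of $u$ combined with $u\equiv 0$ on $[a^*,+\infty)$ gives $u(a^*)=0$. For the derivative, the strict inequality $a^*<b^*$ yields an open neighborhood of $a^*$ on which $v\equiv 0$; on that neighborhood $\alpha u-dv=\alpha u$, and the previously established $C^1$-regularity of $\alpha u-dv$ promotes $u$ to $C^1$ near $a^*$. Since $u\equiv 0$ on the right-neighborhood $[a^*,a^*+\varepsilon)$, $C^1$-regularity forces $u'(a^*)=0$.

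Finally, the Cauchy--Lipschitz theorem applied to the second-order ODE $u''+cu'+u(1-u)=0$ with zero Cauchy data at $a^*$ admits only the trivial solution; hence $u\equiv 0$ on a left neighborhood of $a^*$, contradicting $u>0$ on $(-\infty,a^*)$ by definition of $a^*$.
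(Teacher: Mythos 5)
Your main line of argument is sound and is essentially the paper's: localize to the region where $v$ vanishes, obtain the Fisher--KPP ODE for $u$ there, extract zero Cauchy data at the right edge of $\{u>0\}$ from the $\mathcal{C}^1$-regularity of $\alpha u-dv$, and conclude by Cauchy--Lipschitz. Your treatment of the singular term $ku_kv_k$ (rewriting $\alpha k u_kv_k=dv_k''+c_kv_k'+rv_k(1-v_k)$ and integrating against $\chi$, so that uniform convergence $v_k\to 0$ on $\operatorname{supp}\chi$ kills the limit) is correct and self-contained; the paper gets the same conclusion more cheaply by restricting its already-established distributional identity $-\alpha u''+dv''-\alpha cu'+cv'=\alpha u(1-u)-rv(1-v)$ to $(-\infty,a)$, where $v\equiv 0$.

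There is, however, a genuine gap: your contradiction lives entirely at the point $a^*$, so it requires $a^*>-\infty$, i.e.\ $u\not\equiv 0$. But at this stage of the argument nothing you have used rules out $a^*=-\infty$: the normalization pins down only one of the two limits (namely $u(0)=\tfrac12$ if $c\le 0$, or $v(0)=\tfrac12$ if $c>0$), so the other one may a priori be identically zero, in which case $(-\infty,a^*)=\emptyset$ and the statement ``$u>0$ on $(-\infty,a^*)$'' is vacuous rather than contradicted. Since $u\equiv 0$ together with $v$ vanishing on some interval is precisely an instance of what the lemma must exclude, this case cannot be skipped. To close it you need two further ingredients, both present in the paper's proof: the mirror argument at $b^*=\sup\{x:v(x)=0\}$, which forces $v\equiv 0$ as well whenever $u\equiv 0$; and then the normalization sequence ($u_k(0)=\tfrac12$ or $v_k(0)=\tfrac12$, passed to the limit by locally uniform convergence) to exclude $u\equiv v\equiv 0$ on all of $\mathbb{R}$. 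With that addition your proof is complete.
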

\begin{proof}
Once again, we consider a non-empty compact set $[a,b]$. By monotonicity, if $u_{|\left[a,b\right]}=0$, then $u_{|[a,+\infty)}=0$.
Similarly, $v_{|(-\infty,b]}=0$. It yields, in $\mathcal{D}'\left(\left(-\infty,a\right)\right)$,
$-u''-cu'=u\left(1-u\right)$ and $\alpha u'-dv'=\alpha u'$. Therefore
$u'$ is continuous and, using $-u''-cu'=u\left(1-u\right)$, $u''$
is also continuous and the previous differential equation is satisfied
pointwise.

Now, we get by induction that $u$ is $\mathcal{C}^{\infty}$ in $\left(-\infty,a\right)$.
Since it does not explode on the left of $a$, it is the restriction
of a solution on a strictly larger interval. Since $u$ is regular,
$u'\left(a\right)=0$ and by Cauchy \textendash{} Lipschitz\textquoteright s
theorem, $u$ is identically null. By the same reasoning, $v$ is
also identically null. 

To prevent $u$ and $v$ from being both null on the whole real line,
either one of the two normalization sequences $\left(u_{k}\left(0\right)\right)_{k>1}=\left(\frac{1}{2}\right)$
and $\left(v_{k}\left(0\right)\right)=\left(\frac{1}{2}\right)$ combined
with locally uniform convergence suffices. \end{proof}
\begin{rem}
We already knew that $uv=0$ everywhere. Thus the previous lemma ensures that, 
for any $a<b$, $u_{|[a,b]}=v_{|[a,b]}=0$ is not possible; one of the two densities 
has to be positive whereas the other has to be null.
\end{rem}
\begin{lem}
Neither $u$ nor $v$ can be positive everywhere.\end{lem}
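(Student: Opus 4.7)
The plan is to argue by contradiction and exploit that if, say, $u$ is everywhere positive then $v$ must vanish identically by the segregation property $uv=0$, which collapses the limit system to a pure Fisher--KPP travelling wave; the classical minimum-speed obstruction then provides the contradiction.

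First I would suppose $u>0$ on all of $\mathbb{R}$ (the case $v>0$ everywhere is handled symmetrically at the end). By $uv=0$ this forces $v\equiv 0$, hence $v(0)=0$: the normalization $v_{k}(0)=\tfrac{1}{2}$ cannot hold, so we must be in the regime $c\leq 0$ with $u(0)=\tfrac{1}{2}$. The lemma that $\alpha u-dv\in\mathcal{C}^{1}$ then gives $u\in\mathcal{C}^{1}$, and the distributional identity derived above reduces to $-u''-cu'=u(1-u)$; standard bootstrapping upgrades this to a classical $\mathcal{C}^{2}(\mathbb{R})$ equation.

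Next I would identify the limits $u_{\pm}=\lim_{x\to\pm\infty}u(x)$, since the locally uniform convergence $u_{k}\to u$ does not carry over the prelimit boundary conditions $u_{k}(\pm\infty)\in\{0,1\}$. Because $u$ is non-increasing and bounded in $[0,1]$, one has $u'\in L^{1}(\mathbb{R})$; the ODE makes $u''$ bounded, hence $u'$ uniformly continuous, so $u'(\pm\infty)=0$. Since the primitive $u'(x)-u'(0)=\int_{0}^{x}u''$ must remain bounded, $u''(\pm\infty)=0$ as well, and the ODE then forces $u_{\pm}(1-u_{\pm})=0$. Combined with $u(0)=\tfrac{1}{2}$ and monotonicity this pins down $u_{-}=1$ and $u_{+}=0$. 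So $u$ is a monotone Fisher--KPP travelling wave joining $1$ to $0$ with speed $c\leq 0$; the classical theorem of Kolmogorov, Petrovsky and Piscunov rules this out below the critical speed $c^{*}=2$, giving the contradiction.

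The symmetric case $v>0$ everywhere proceeds along the same scheme: one gets $u\equiv 0$, $c>0$ and $v(0)=\tfrac{1}{2}$, together with the classical equation $-dv''-cv'=rv(1-v)$. After the rescaling $y=x\sqrt{r/d}$ this becomes a Fisher--KPP wave equation with speed $c/\sqrt{rd}>0$ going from $0$ at $-\infty$ to $1$ at $+\infty$, which, after the reflection $y\mapsto -y$ that restores the standard orientation, requires $c/\sqrt{rd}\leq -2$, again contradicting $c>0$. The main delicate point in both cases is the second step: the boundary values $u_{\pm}$ (or $v_{\pm}$) have to be recovered intrinsically from the ODE via $L^{1}$-integrability of the derivative and a uniform continuity argument, rather than inherited from the approximating problem.
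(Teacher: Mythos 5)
Your proof is correct and follows essentially the same route as the paper: segregation ($uv=0$) plus the normalization force the everywhere-positive density to be a monotone Fisher--KPP front connecting $1$ to $0$ with subcritical speed ($c\le 0<2$, resp. $c/\sqrt{rd}>-2$ after rescaling and reflection), which the KPP theory excludes. The only divergence is technical: you identify the limits $u(\pm\infty)\in\{0,1\}$ via $u'\in L^{1}(\mathbb{R})$ and a Barbalat-type uniform-continuity argument (which requires first noting that $u'$ is bounded, e.g.\ by interior elliptic estimates), whereas the paper uses l'Hospital's rule with the weight $e^{c\xi}$; both work, and yours has the small advantage of covering $c=0$ without a separate remark.
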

\begin{proof}
If $c\leq0$, the normalization sequence is $\left(u_{k}\left(0\right)\right)=\left(\frac{1}{2}\right)$.
It ensures that $u$ is not null. We define $\xi_{u}=\sup\left\{ \xi\in\mathbb{R}\ |\ u\left(\xi\right)>0\right\} \in(-\infty,+\infty]$. 

If $\xi_{u}=+\infty$ (that is, $u$ positive everywhere), $v$ is
null. 

In such a case, we have $u$ decreasing, bounded between $0$ and $1$, with limits
at infinity, non-constant by normalization, and $-u''-cu'=u\left(1-u\right)$
everywhere with $u\in\mathcal{C}^{\infty}\left(\mathbb{R}\right)$. 

This yields that $\lim_{-\infty}u=1$ and $\lim_{+\infty}u=0$. To
that end, we use L\textquoteright Hospital\textquoteright s rule. 

Let $l=u\left(-\infty\right)$, $G:\xi\mapsto\exp\left(c\xi\right)$
and $F=Gu'$ so that $F'=G\left(u''+cu'\right)=-Gu\left(1-u\right)$.
$F$ and $G$ are differentiable in $\mathbb{R}$, $G'\neq0$ and
$G\to+\infty$ as $\xi\to-\infty$; besides, $\frac{F'}{G'}\to-\frac{l\left(1-l\right)}{c}$.
By L\textquoteright Hospital\textquoteright s rule, $\frac{F}{G}\to-\frac{l\left(1-l\right)}{c}$,
that is $u'\left(-\infty\right)=-\frac{l\left(1-l\right)}{c}$. In
the end, necessarily, $l\in\left\{ 0,1\right\} $. 

At $+\infty$, we use the other version of L\textquoteright Hospital\textquoteright s
rule, noticing that $u'$ is bounded in $\mathbb{R}_{+}$ (easy to
prove) and checking that $F$ and $G$ go to $0$. Eventually, by
monotonicity, the limits are $1$ at $-\infty$ and $0$ at $+\infty$.

Thus $u$ is a travelling wave for the Fisher \textendash{} KPP equation
with speed $c\leq0<\sqrt{2}$, hence the contradiction \cite{KPP}.

If $c>0$, we just apply this reasoning to $v$ with normalization
$\left(v_{k}\left(0\right)\right)=\left(\frac{1}{2}\right)$. \end{proof}
\begin{cor}
The two quantities $\sup\left\{ \xi\in\mathbb{R}\ |\ u\left(\xi\right)>0\right\} $
and $\inf\left\{ \xi\in\mathbb{R}\ |\ v\left(\xi\right)>0\right\}$
are real and equal. Up to translation, we can assume it to be $0$.
By continuity of $u$ and $v$, $u\left(0\right)=v\left(0\right)=0$.\end{cor}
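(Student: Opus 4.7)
The plan is to read off the structure of the supports of $u$ and $v$ from monotonicity and continuity, and then use the two preceding lemmas to pin down a common separating point. Set $\xi_{u}=\sup\{\xi\in\mathbb{R}\mid u(\xi)>0\}$ and $\xi_{v}=\inf\{\xi\in\mathbb{R}\mid v(\xi)>0\}$; the goal is to show these are finite and equal, after which the translation and the continuity statement are immediate.

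First I would check that both $\xi_{u}$ and $\xi_{v}$ lie in $\mathbb{R}$. The previous lemma (neither $u$ nor $v$ is positive everywhere) immediately gives $\xi_{u}<+\infty$ and $\xi_{v}>-\infty$. For the remaining bounds, if $u\equiv0$ then the lemma forbidding simultaneous vanishing on non-empty compact sets would force $v>0$ on every compact interval, hence $v>0$ everywhere, contradicting the previous lemma; thus $u\not\equiv0$, and by monotonicity $\xi_{u}>-\infty$. The argument for $\xi_{v}<+\infty$ is symmetric. Because $u$ is continuous and non-increasing with values in $[0,1]$, the set $\{u>0\}$ is open and is a lower set of $\mathbb{R}$, so it equals $(-\infty,\xi_{u})$, and continuity at $\xi_{u}$ yields $u(\xi_{u})=0$. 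Symmetrically $\{v>0\}=(\xi_{v},+\infty)$ with $v(\xi_{v})=0$.

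It then remains to show $\xi_{u}=\xi_{v}$. The identity $uv=0$ forces $(-\infty,\xi_{u})$ and $(\xi_{v},+\infty)$ to be disjoint, hence $\xi_{u}\leq\xi_{v}$. Conversely, if $\xi_{u}<\xi_{v}$, any compact interval $[a,b]\subset(\xi_{u},\xi_{v})$ would satisfy $u_{|[a,b]}=v_{|[a,b]}=0$, contradicting the immediately preceding lemma. Hence $\xi_{u}=\xi_{v}$; translating by this common value reduces to $\xi_{u}=\xi_{v}=0$, and $u(0)=v(0)=0$ follows from the continuity of $u$ and $v$ together with the identifications above. No real obstacle is expected: the proof is essentially bookkeeping combining the two preceding lemmas with the monotonicity of $u$ and $v$.
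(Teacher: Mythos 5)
Your argument is correct and is precisely the bookkeeping the paper leaves implicit: the corollary is stated there without proof, as an immediate consequence of the preceding lemmas ($uv=0$, no simultaneous vanishing on a compact interval, neither $u$ nor $v$ positive everywhere) combined with the monotonicity and continuity of $u$ and $v$, which is exactly what you assemble. The only cosmetic caveat is that the non-vanishing lemma should be read as forbidding identical vanishing on an interval $\left[a,b\right]$ with $a<b$ (not on a singleton, since $u(0)=v(0)=0$ does occur), but your use of it respects this.
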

\begin{lem}
We have:
\begin{itemize}
\item $u\in\mathcal{C}^{\mathcal{1}}\left(\left(-\infty,0\right)\cup\left(0,+\infty\right)\right)$, 
\item $v\in\mathcal{C}^{\mathcal{1}}\left(\left(-\infty,0\right)\cup\left(0,+\infty\right)\right)$,
\end{itemize}
Besides, we can extend $u'$ and $v'$ by continuity on the left and
on the right respectively and obtain $u'\left(0\right)=\lim_{\xi\to0,\xi<0}u'\left(\xi\right)$
and $v'\left(\xi_{v}\right)=\lim_{\xi\to0,\xi>0}v'\left(\xi\right)$
which are finite and satisfy $-\alpha u'\left(0\right)=dv'\left(0\right)>0$.\end{lem}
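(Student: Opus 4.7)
The plan is to split the proof into four steps: interior regularity of $u$ and $v$ on each open half-line separately, extraction of finite one-sided limits of $u'$ and $v'$ at the junction $0$, the jump relation deduced from the already-established $\mathcal{C}^{1}$-regularity of $\alpha u-dv$, and finally an ODE-uniqueness argument upgrading the jump identity to a strict positivity.

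For the interior regularity, the preceding corollary and the monotonicity of $u$ and $v$ ensure $v\equiv 0$ on $(-\infty,0)$ and $u\equiv 0$ on $(0,+\infty)$. Restricting the distributional identity $-\alpha u''+dv''-\alpha cu'+cv'=\alpha u(1-u)-rv(1-v)$ to $(-\infty,0)$ kills every $v$-term, yielding $-u''-cu'=u(1-u)$ in $\mathcal{D}'((-\infty,0))$. A standard $W^{k,p}$-bootstrap upgrades this to $u\in\mathcal{C}^{\infty}((-\infty,0))$ with the ODE holding classically, and the symmetric argument gives $v\in\mathcal{C}^{\infty}((0,+\infty))$ solving $-dv''-cv'=rv(1-v)$ classically. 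This proves the first two bullet points.

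Next, I would rewrite the ODE for $u$ in the linear first-order form $(e^{c\xi}u'(\xi))'=-e^{c\xi}u(\xi)(1-u(\xi))$ on $(-\infty,0)$. Since $u(1-u)$ is bounded, the right-hand side is integrable on any interval $(a,0)$, so integrating from $a$ and letting $\xi\to 0^{-}$ shows $e^{c\xi}u'(\xi)$ converges to a finite value; thus $u'(0):=\lim_{\xi\to 0^{-}}u'(\xi)$ exists and is finite, and the same argument on $(0,+\infty)$ yields a finite $v'(0):=\lim_{\xi\to 0^{+}}v'(\xi)$. Since $\alpha u-dv\in\mathcal{C}^{1}(\mathbb{R})$ by the earlier lemma, and since $(\alpha u-dv)'$ equals $\alpha u'$ on $(-\infty,0)$ and $-dv'$ on $(0,+\infty)$, continuity at $0$ forces $\alpha u'(0)=-dv'(0)$.

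To upgrade this identity to strict positivity, monotonicity gives $u'(0)\le 0$ and $v'(0)\ge 0$. In the case $c\le 0$, the normalization $u_k(0)=\tfrac{1}{2}$ combined with locally uniform convergence and the translation fixing the junction at $0$ forces $u\not\equiv 0$ on $(-\infty,0)$. If $u'(0)=0$, then the Cauchy problem for $-u''-cu'=u(1-u)$ with data $u(0)=u'(0)=0$ admits only the trivial solution by Cauchy-Lipschitz, contradicting $u\not\equiv 0$; hence $u'(0)<0$ and $dv'(0)=-\alpha u'(0)>0$. The case $c>0$ is perfectly symmetric using the $v_{k}(0)=\tfrac{1}{2}$ normalization. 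The main obstacle is the extraction of the finite one-sided derivatives at the junction, for which the linear first-order rewriting is the key trick; everything else is routine elliptic bootstrapping, the continuity of a $\mathcal{C}^{1}$ function, and Cauchy-Lipschitz.
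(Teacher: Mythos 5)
Your proof is correct and follows essentially the same route as the paper: interior bootstrap on each half-line, the matching condition read off from the continuity of $\left(\alpha u-dv\right)'$ (established in the earlier lemma), and a Cauchy--Lipschitz argument to exclude $u'\left(0\right)=v'\left(0\right)=0$. The only difference is that your integrating-factor step $\left(e^{c\xi}u'\right)'=-e^{c\xi}u\left(1-u\right)$ for extracting the finite one-sided limits is redundant, since on $\left(-\infty,0\right)$ one has $\left(\alpha u-dv\right)'=\alpha u'$ and the $\mathcal{C}^{1}$ regularity of $\alpha u-dv$ already hands you that limit for free.
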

\begin{proof}
$u$ is identically zero on $\left(0,\infty\right)$ so $u_{|\left(0,+\infty\right)}$
is trivially $\mathcal{C}^{\mathcal{1}}$. In $\left(-\infty,0\right)$,
it is a weak, and then regular (same routine), solution of $u''+cu'+u\left(1-u\right)=0$. 

Eventually, just recall that $\alpha u-dv\in\mathcal{C}^{1}\left(\mathbb{R}\right)$.
If its derivative at $0$ is zero, by the same kind of Cauchy \textendash{}
Lipschitz reasoning, $u=v=0$ everywhere.\end{proof}
\begin{rem}
The relation $\alpha u'\left(0\right) + dv'\left(0\right) = 0$ is 
essentially the free boundary condition obtained by Nakashima and 
Wakasa in \cite{Nakashima}.
\end{rem}
\begin{lem}
$\lim_{-\infty}u=1$ and $\lim_{+\infty}v=1$. \end{lem}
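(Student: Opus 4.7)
The plan is to argue separately at $-\infty$ for $u$ and at $+\infty$ for $v$; the two cases are symmetric, so I describe only the first.

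By the preceding corollary, $0 = \sup\{\xi \in \mathbb{R} : u(\xi)>0\}$, so one can find $\xi_0 < 0$ with $u(\xi_0) > 0$; the monotonicity of $u$ then gives $u(\eta) \ge u(\xi_0) > 0$ for every $\eta \le \xi_0$, so the limit $l := \lim_{-\infty} u$, which exists in $[0,1]$ by monotonicity, is strictly positive. Showing $l = 1$ therefore reduces to establishing $l(1-l) = 0$.

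On $(-\infty, 0)$ the function $u$ is a $C^{\infty}$ classical solution of $u'' + cu' + u(1-u) = 0$, bounded in $[0,1]$ and non-increasing. My plan is to mimic the L'Hospital trick from the lemma ``Neither $u$ nor $v$ can be positive everywhere'': when $c < 0$, setting $G(\xi) = e^{c\xi}$ and $F(\xi) = G(\xi) u'(\xi)$, one has $F' = -G \cdot u(1-u)$ and $G(\xi) \to +\infty$ as $\xi \to -\infty$, so L'Hospital's rule gives $u'(-\infty) = -l(1-l)/c$, a limit which must vanish because otherwise the bounded monotone $u$ would escape to infinity. When $c \ge 0$ the rule does not apply directly, and I would instead multiply the ODE by $u'$ and integrate over $(\xi, 0)$ to get
\[
\frac{1}{2} u'(\xi)^2 = \frac{1}{2} u'(0)^2 - \frac{1}{2} u(\xi)^2 + \frac{1}{3} u(\xi)^3 + c \int_\xi^0 u'(s)^2 \, ds,
\]
and combine this energy identity with the boundedness and monotonicity of $u$ to obtain again $u'(-\infty) = 0$ (any nonzero limit, or any divergence of $\int_{-\infty}^0 u'^2$, would force $u$ out of $[0,1]$). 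In all cases the ODE then forces $u''(\xi) \to -l(1-l)$; since a function with vanishing first-derivative limit cannot have a non-vanishing second-derivative limit, this yields $l(1-l) = 0$ and hence $l = 1$.

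The main technical point is the ODE-level argument in the case $c \ge 0$, where L'Hospital is unavailable and one must rule out persistent oscillation or unbounded growth of $u'$ using only the energy identity together with the monotonicity and boundedness of $u$. The symmetric treatment, carried out on $(0, +\infty)$ starting from $dv'' + cv' + rv(1-v) = 0$, then gives $\lim_{+\infty} v = 1$.
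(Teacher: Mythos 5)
Your proposal is correct and its skeleton matches the paper's: the paper's proof is literally ``Same as before'', i.e.\ repeat the L'Hospital argument from the lemma ruling out everywhere-positive $u$ or $v$ --- show $u'(-\infty)=0$, read $l(1-l)=0$ off the ODE, and then select the right root. Two points where you diverge, both defensible and arguably improvements. First, for the unfavourable sign of $c$ the paper invokes the $0/0$ version of L'Hospital's rule (checking that $F=Gu'$ and $G$ both tend to $0$), whereas you substitute the energy identity obtained by multiplying the equation by $u'$ and integrating; your identity is correct as written, and unlike either version of L'Hospital it also covers $c=0$, where $G=e^{c\xi}$ is constant and the paper's argument degenerates. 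Second, you make explicit the step that upgrades $l\in\{0,1\}$ to $l=1$, namely $l\ge u(\xi_0)>0$ for some $\xi_0<0$ supplied by the corollary $\sup\{\xi\in\mathbb{R}\ |\ u(\xi)>0\}=0$; the paper leaves this entirely to the reader. If you expand the sketch, the one step to write out in full is the claim $\int_{-\infty}^0 u'^2<\infty$ when $c>0$: divergence of the integral would, by your identity, force $u'(\xi)^2\to+\infty$ and hence $u\notin L^\infty$, after which convergence of $u'(\xi)^2$ plus boundedness and monotonicity of $u$ give $u'(-\infty)=0$ as you assert.
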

\begin{proof}
Same as before.\end{proof}
\begin{lem}
$c\in\left(-2\sqrt{rd},2\right)$, that is $c\notin\left\{ -2\sqrt{rd},2\right\} $.\end{lem}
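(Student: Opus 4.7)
My plan is to rule out both extremal speeds by contradiction, in each case reducing to a scalar Fisher--KPP ODE whose critical travelling wave is known to be strictly positive on $\mathbb{R}$, which contradicts the free boundary $u(0)=v(0)=0$ already established.

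First, I suppose $c=2$. The preceding lemmas give that $u$ is a smooth, strictly positive, strictly decreasing solution of $u''+2u'+u(1-u)=0$ on $(-\infty,0)$ with $u(-\infty)=1$, $u(0)=0$ and $u'(0)<0$. Writing this as the planar system $u'=p$, $p'=-2p-u(1-u)$, and using that the monotone bounded $u$ forces $u'(\xi)\to 0$ as $\xi\to-\infty$ (a standard argument using the ODE to bound $u''$ and concluding via monotonicity), the orbit $(u(\xi),u'(\xi))$ tends to the saddle $(1,0)$. By Cauchy--Lipschitz it therefore lies on the one-dimensional unstable manifold of $(1,0)$, specifically on the branch entering $\{0<u<1,\,u'<0\}$. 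At $c=2$, this branch is uniquely determined up to translation in $\xi$ and coincides with the critical Fisher--KPP travelling wave of speed $2$, which is strictly positive on $\mathbb{R}$ \cite{KPP}. Hence $u(0)>0$, a contradiction.

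Next, I suppose $c=-2\sqrt{rd}$. The rescaling $\tilde v(\eta)=v(\sqrt d\,\eta)$ transforms the $v$-equation into $\tilde v''+(c/\sqrt d)\tilde v'+r\tilde v(1-\tilde v)=0$ with $c/\sqrt d=-2\sqrt r$, which is the scalar Fisher--KPP equation at the critical leftward speed for reaction rate $r$. The orbit of $(\tilde v,\tilde v')$ converges to the saddle $(1,0)$ as $\eta\to+\infty$ and hence follows the unique branch of its one-dimensional stable manifold inside $\{0<\tilde v<1,\,\tilde v'>0\}$. This branch is a translate of the critical KPP wave connecting $0$ at $-\infty$ to $1$ at $+\infty$, which is again strictly positive on $\mathbb{R}$. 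Hence $v(0)>0$, a contradiction.

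The main non-trivial input is the classical phase-plane statement that the critical KPP heteroclinic is unique up to translation and stays strictly positive; this is exactly what fails for $c$ strictly inside the open interval, where the origin becomes a spiral and the corresponding branch of manifold does cross $\{u=0\}$ or $\{v=0\}$ at a finite $\xi$ with nonzero derivative, which is precisely what makes matching with the free boundary condition $\alpha u'(0)+dv'(0)=0$ possible. The only remaining technical point is the asymptotic $u'(\xi)\to 0$ at $-\infty$ (resp.\ $\tilde v'\to 0$ at $+\infty$), which is routine for bounded monotone solutions of autonomous second-order ODEs.
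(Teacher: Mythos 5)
Your proof is correct and arrives at the same contradiction as the paper --- at an extremal speed the relevant density would have to coincide with the critical Fisher--KPP front on its whole support and therefore could not vanish at the interface --- but by a genuinely different mechanism. The paper treats (explicitly only) the case $c=-2\sqrt{rd}$ by a comparison argument: it normalises the critical KPP wave $v_{KPP}$ so that $v_{KPP}\left(\xi^{\star}\right)=v\left(\xi^{\star}\right)=\frac{1}{2}$, sets $f=v_{KPP}-v$, kills $f$ on $\left(\xi^{\star},+\infty\right)$ with the maximum and minimum principles (the zeroth-order coefficient $r\left(1-f-2v\right)$ being negative there), and then propagates $f\equiv0$ down to $0$ by a Cauchy--Lipschitz argument, contradicting $f\left(0\right)=v_{KPP}\left(0\right)>0$. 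You instead identify the orbit of $u$ (resp.\ of the rescaled $v$) with the unique branch of the one-dimensional unstable (resp.\ stable) manifold of the hyperbolic saddle $\left(1,0\right)$ in the phase plane, and invoke the classical fact that at the critical speed this branch is exactly the orbit of the strictly positive critical front. Both arguments consume the same input from \cite{KPP}; yours is purely a phase-plane/invariant-manifold argument that handles the two endpoints symmetrically and explicitly, while the paper's is a PDE-style comparison written out for one endpoint and left as symmetric for the other. The steps you flag as routine (boundedness of $u'$ and $u'\to0$ at $-\infty$, so that the orbit really converges to the saddle and the stable-manifold theorem applies, and likewise for $\tilde{v}$ at $+\infty$) are indeed standard for bounded monotone solutions of autonomous second-order equations, and are of the same nature as facts the paper itself uses without detailed proof; I see no gap.
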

\begin{proof}
Let us assume, for example, $c=-2\sqrt{rd}$. Let $\xi^{\star}>0$
such that $v\left(\xi^{\star}\right)=\frac{1}{2}$. 

We know from Fisher and KPP \cite{KPP} that $c=-2\sqrt{rd}$ is 
the maximal speed for wich there exists a travelling
wave $v_{KPP}$ positive, going from $0$ at $-\mathcal{1}$ to $1$
at $+\mathcal{1}$, which satisfies 
\[
-dv_{KPP}''-cv_{KPP}'=rv_{KPP}\left(1-v_{KPP}\right)
\]
We normalize by fixing $v_{KPP}\left(\xi^{\star}\right)=\frac{1}{2}$.
Let $f=v_{KPP}-v$. 

First, we can easily check that $f$ is in $\mathcal{C}\left(\mathbb{R}\right)\cap\mathcal{C}^{\infty}\left(\left(-\infty,0\right)\cup\left(0,+\infty\right)\right)$
and satisfies 
\[
-df''-cf'=rf\left(1-f\right)-2rvf
\]
 in $\left(0,+\infty\right)$. 

For any $\xi>\xi^{\star}$, $1-f\left(\xi\right)-2v\left(\xi\right)=1-v_{KPP}\left(\xi\right)-v\left(\xi\right)<0$,
with $f\left(\xi^{\star}\right)=0$. We can therefore apply the maximum
principle to the operator 
\[
d\bullet''+c\bullet'+r\left(1-f-2v\right)\bullet
\]
 in any interval $\left(\xi^{\star},b\right)$, $b>\xi^{\star}$.
Since $\lim_{+\infty}f=0$, it gives us that $f\left(\xi\right)\leq0$
for any $\xi\in\left(\xi^{\star},+\infty\right)$. But we can also
apply the minimum principle to the same operator, and we eventually
get that $f$ is identically zero in $\left(\xi^{\star},+\infty\right)$.
This way, $f'\left(\xi^{\star}\right)=0$, hence $f$ is identically
zero in $\left(0,+\infty\right)$, which is impossible since $f\left(0\right)>0$
and $f$ is continuous in $\mathbb{R}$.
\end{proof}

\subsection{Limit problem}

Let us sum up all these results in the following theorem.
\begin{thm}
There exist locally uniform limits $u$ and $v$ of $\left(u_{k}\right)$
and $\left(v_{k}\right)$ respectively. They satisfy:
\begin{enumerate}
\item $u,v\in\mathcal{C}\left(\mathbb{R}\right)\cap\mathcal{C}^{\infty}\left(\left(-\infty,0\right)\cup\left(0,+\infty\right)\right)$;
\item $\lim_{\xi\to-\infty}u\left(\xi\right)=1$;
\item $\lim_{\xi\to+\infty}v\left(\xi\right)=1$;
\item $u_{|\mathbb{R}_{+}}=0$;
\item $v_{|\mathbb{R}_{-}}=0$;
\item $u'\leq0$ in $\mathbb{R}_{-}$with $u'\left(0\right)$ defined by
left-continuity;
\item $v'\geq0$ in $\mathbb{R}_{+}$ with $v'\left(0\right)$ defined by
right-continuity;
\item $-u''-cu'=u\left(1-u\right)$ in $\left(-\infty,0\right)$;
\item $-dv''-cv'=rv\left(1-v\right)$ in $\left(0,+\infty\right)$;
\item $\alpha u'\left(0\right)=-dv'\left(0\right)$.
\end{enumerate}
\end{thm}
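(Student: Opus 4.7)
My plan is to treat this theorem as a synthesis step: every item is either a verbatim restatement of a lemma already proved in this subsection or follows immediately by combining two of them. First I would recall that the compactness proposition together with a standard diagonal extraction provides locally uniform limits $u$ and $v$ along some subsequence, extracted once and for all; from now on $u$, $v$, $c$ denote these subsequential limits.

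I would then dispatch items (1)--(7) by direct cross-reference. The continuity of $u$ and $v$ on $\mathbb{R}$ is the content of the first regularity lemma; the two-sided limits (2) and (3) are exactly the lemma asserting $\lim_{-\infty}u=1$ and $\lim_{+\infty}v=1$. For (4) and (5), I would combine the segregation identity $uv=0$ with the corollary fixing the translation so that $\sup\{u>0\}=\inf\{v>0\}=0$, which forces $u$ to vanish on $[0,+\infty)$ and $v$ on $(-\infty,0]$. Monotonicity in (6) and (7) is preserved under locally uniform convergence of monotone functions, and the one-sided values $u'(0^-)$ and $v'(0^+)$ are supplied by the $\mathcal{C}^1$ lemma established just above.

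The ODE statements (8), (9) together with the $\mathcal{C}^\infty$ part of (1) require slightly more care. The plan is to fix a test function $\varphi\in\mathcal{D}((-\infty,0))$ and pass to the distributional limit in the equation satisfied by $u_k$. The cross term $k\int u_k v_k\varphi$ tends to zero because $v\equiv 0$ on $(-\infty,0)$: the same integration-by-parts estimate used to prove $uv=0$, but localized to a compact subset of $(-\infty,0)$, yields $\int u_k v_k\varphi=O(1/k)$. The remaining terms pass by dominated convergence, giving $-u''-cu'=u(1-u)$ in $\mathcal{D}'((-\infty,0))$; a routine bootstrap from $\mathcal{C}^0$ then promotes $u$ to $\mathcal{C}^\infty((-\infty,0))$ and the equation to the pointwise sense. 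A symmetric argument handles $v$ on $(0,+\infty)$. Finally, item (10) is read directly off the already established $\mathcal{C}^1$-regularity of $\alpha u-dv$ on all of $\mathbb{R}$: since $v$ vanishes on $(-\infty,0)$ and $u$ on $(0,+\infty)$, the derivative of $\alpha u-dv$ equals $\alpha u'$ on the left of $0$ and $-dv'$ on the right, so continuity across $0$ reads $\alpha u'(0)=-dv'(0)$.

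The only step requiring some genuine care is the distributional vanishing of the cross term on each open half-line, which underpins the bootstrap in (8)--(9); this is essentially a localized version of the argument used to establish $uv=0$, and it is the one place where I would write out the test-function manipulation cleanly. Everything else is pure bookkeeping.
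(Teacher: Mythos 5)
Your overall plan --- reading the theorem as a summary of the lemmas of the preceding subsections --- is exactly what the paper does (it offers no separate proof), and your dispatch of items (1)--(7) and (10) is correct. The problem lies in your treatment of (8)--(9), the one place where you depart from cross-referencing. You claim that for $\varphi\in\mathcal{D}\left(\left(-\infty,0\right)\right)$ the cross term $k\int u_{k}v_{k}\varphi$ tends to zero because the localized segregation estimate gives $\int u_{k}v_{k}\varphi=O\left(1/k\right)$. That estimate (which comes from the equation for $u_{k}$, whose right-hand side is controlled by $u_{k}$, not by $v_{k}$) only yields $k\int u_{k}v_{k}\varphi=O\left(1\right)$: the nonnegative functions $ku_{k}v_{k}$ are locally bounded in $L^{1}$ and could a priori converge weakly-$\ast$ to a nontrivial nonnegative measure $\mu$ on $\left(-\infty,0\right)$, in which case the limit equation would read $-u''-cu'=u\left(1-u\right)-\mu$. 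The fact that $v\equiv0$ there does not by itself exclude concentration. As written, the argument for (8) (and symmetrically (9)) therefore has a genuine gap.

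The gap is fixable, but not by the estimate you cite. Either (i) redo the integration by parts with the equation for $v_{k}$ rather than the one for $u_{k}$: for $\varphi$ supported in a compact $K\subset\left(-\infty,0\right)$ one gets
$\alpha k\left|\int u_{k}v_{k}\varphi\right|\leq r\int v_{k}\left|\varphi\right|+\left|c_{k}\right|\int v_{k}\left|\varphi'\right|+d\int v_{k}\left|\varphi''\right|$,
and every term on the right is bounded by $\left\Vert v_{k}\right\Vert _{L^{\infty}\left(K\right)}$ times a constant independent of $k$, which tends to $0$ by locally uniform convergence of $v_{k}$ to $0$ on $\left(-\infty,0\right)$; or (ii) follow the paper's route, which sidesteps the issue entirely: take the combination $\alpha\times\left(\text{equation for }u_{k}\right)-\left(\text{equation for }v_{k}\right)$, in which the $O\left(k\right)$ cross terms cancel identically, pass to the limit to get $-\alpha u''+dv''-\alpha cu'+cv'=\alpha u\left(1-u\right)-rv\left(1-v\right)$ in $\mathcal{D}'\left(\mathbb{R}\right)$, and then restrict to $\left(-\infty,0\right)$, where $v\equiv0$, to read off $-u''-cu'=u\left(1-u\right)$ (and symmetrically on $\left(0,+\infty\right)$). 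Route (ii) is also what already produces the $\mathcal{C}^{1}$ regularity of $\alpha u-dv$ that you correctly invoke for item (10), so it costs nothing extra.
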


The behaviour of these limits is illustrated with the following figure.

\begin{figure}[h]\label{figure:thm}
\begin{center}
\resizebox{12cm}{!}{\input{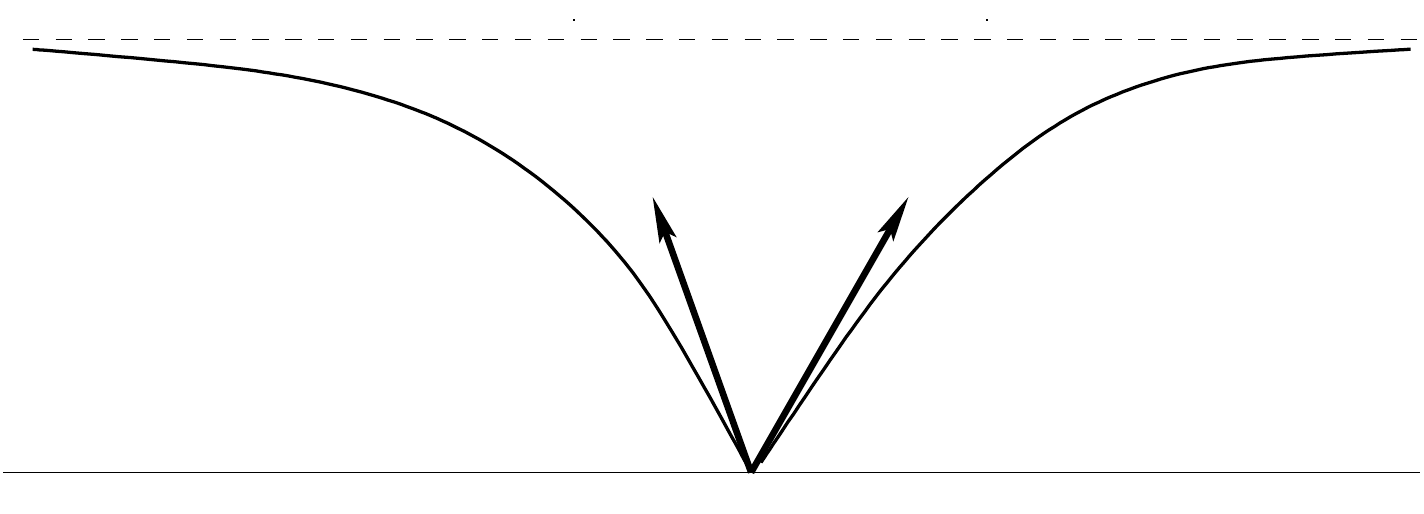_t}}
\end{center}
\vspace{-.5cm}
\end{figure}

\subsection{Uniqueness of the limit points}
\begin{thm}
For any $c>-2$, the problem 
\[
\left\{ \begin{matrix}-y_{c}''-cy_{c}'=y_{c}\left(1-y_{c}\right) & \mbox{in} & \left(0,+\infty\right)\\
y_{c}\left(0\right)=0
\end{matrix}\right.
\]
 admits a unique positive solution. 

It satisfies $y_{c}'>0$ in $\mathbb{R}_{+}$ and $\lim_{\xi\to+\infty}y\left(\xi\right)=1$.
Besides, $\gamma:c\mapsto y_{c}'\left(0\right)$ is increasing and
continuous.\end{thm}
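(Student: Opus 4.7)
The plan is to convert the problem into a phase-plane analysis. Setting $z=y_c'$, the ODE becomes the autonomous planar system $y'=z$, $z'=-cz-y(1-y)$, whose only equilibria are $(0,0)$ and $(1,0)$. For every $c\in\mathbb{R}$, a direct computation shows $(1,0)$ is a saddle, with a one-dimensional stable manifold $W^s(c)$ tangent at $(1,0)$ to the eigenvector $(1,\lambda_s)$, where $\lambda_s=(-c-\sqrt{c^2+4})/2<0$. I will reduce existence and uniqueness of $y_c$ to the claim that $W^s(c)$, traced backward from $(1,0)$, reaches the $z$-axis at a unique point $(0,p^\star)$ with $p^\star>0$.

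First I would show that the backward orbit along $W^s(c)$ remains in the open set $\{0<y<1,\ z>0\}$: near $(1,0)$ this is immediate from the sign of $\lambda_s$; and if $z$ vanished at a first backward point $(y^\star,0)$ with $0<y^\star<1$, then $z'(\xi^\star)=-y^\star(1-y^\star)<0$ would force a strict local maximum of $y$ at $\xi^\star$, contradicting $y\nearrow1$ along the forward orbit. An elementary bound on $\mathrm{d}z/\mathrm{d}y=-c-y(1-y)/z$ prevents $z$ from blowing up on $y\in[0,1]$, so the backward extension either hits $\{y=0\}$ at some $(0,p^\star)$ with $p^\star>0$ in finite time, or stays in the strip for all backward times.

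The crucial step, where the hypothesis $c>-2$ enters, is ruling out the second alternative. Since $y$ is monotone decreasing backward and bounded, it tends to a limit in $[0,1)$; together with the dissipation identity $\tfrac{\mathrm{d}}{\mathrm{d}\xi}\bigl[\tfrac{1}{2}z^2+\tfrac{1}{2}y^2-\tfrac{1}{3}y^3\bigr]=-cz^2$ and Poincar\'e--Bendixson, any $\alpha$-limit must be an equilibrium, which forces it to be $(0,0)$. For $c>0$ this is impossible since $(0,0)$ is asymptotically stable in forward time. For $c=0$ the above energy is conserved, and the level set through $(1,0)$ meets $\{y=0\}$ at the explicit point $(0,1/\sqrt{3})$. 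For $-2<c<0$ the eigenvalues at $(0,0)$ form a complex conjugate pair, so any orbit $\alpha$-asymptotic to $(0,0)$ oscillates in $y$, contradicting the monotonicity of $y$ along $W^s(c)$. Hence $W^s(c)$ reaches $\{y=0\}$ in finite time; the properties $y_c'>0$ on $\mathbb{R}_+$ and $y_c(+\infty)=1$ follow directly from the construction.

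For the properties of $\gamma$, I would reduce to the first-order equation for $Z_c(y):=y_c'\circ y_c^{-1}$, which satisfies $Z_c Z_c'=-cZ_c-y(1-y)$ on $(0,1)$ with $Z_c(1^-)=0$ and $Z_c(0)=\gamma(c)$. For $c_1<c_2$, the difference $D=Z_{c_1}-Z_{c_2}$ satisfies the linear equation $D'=(c_2-c_1)+y(1-y)D/(Z_{c_1}Z_{c_2})$. The expansion $Z_c(y)\sim|\lambda_s(c)|(1-y)$ near $y=1$, together with the strict monotonicity of $c\mapsto|\lambda_s(c)|$, gives $D<0$ near $y=1$; at the largest $y^\star\in[0,1)$ where $D$ would vanish one would have $D'(y^\star)=c_2-c_1>0$, contradicting $D<0$ just above $y^\star$. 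Hence $D<0$ on $[0,1)$, which gives $\gamma(c_1)<\gamma(c_2)$. Uniqueness of $y_c$ at fixed $c$ is inherited from that of $W^s(c)$, and continuity of $\gamma$ follows from continuous dependence of $W^s(c)$ and its transverse intersection with $\{y=0\}$ on $c$, via the stable manifold theorem with parameters. The main obstacle of the whole argument is the $\alpha$-limit analysis in the range $-2<c\le0$, where $(0,0)$ is not attractive and the conclusion hinges on combining the monotonicity of $y$ along $W^s(c)$ with the oscillatory linearised dynamics at the origin.
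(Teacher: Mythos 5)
Your phase-plane strategy is genuinely different from the paper's proof, which instead patches the subsolution construction in Du and Lin's Proposition 4.1 (via the principal eigenvalue of the linearisation on $(0,\ell)$ for $|c|<2$ and a comparison argument for $c\geq 2$) and then invokes their sub/supersolution machinery. Your existence argument is sound: the backward continuation of $W^s(c)$ inside $\{0<y<1,\ z>0\}$, the exclusion of an interior zero of $z$, the boundedness of $z$ via $\mathrm{d}z/\mathrm{d}y$, and the three-case analysis of a putative $\alpha$-limit at the origin (forward stability for $c>0$, energy conservation for $c=0$, complex eigenvalues forcing oscillation for $-2<c<0$) are correct and show cleanly where the hypothesis $c>-2$ enters. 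The sliding argument for $D=Z_{c_1}-Z_{c_2}$ and the parametrised stable manifold theorem also give the monotonicity and continuity of $\gamma$ correctly.

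The genuine gap is the sentence ``uniqueness of $y_c$ is inherited from that of $W^s(c)$.'' This presupposes that every positive solution of the boundary value problem lies on the stable manifold of $(1,0)$, i.e.\ converges to $1$; but that is precisely the content of the uniqueness assertion, and your own framework shows it cannot hold as literally stated when $c\geq 2$. Indeed, for $c\geq2$ the KPP heteroclinic from $(1,0)$ to $(0,0)$ lies in $\{0<y<1,\ z<0\}$ and acts as a barrier: the orbit issued from $(0,p)$ with $p>0$ small crosses $\{z=0\}$ at some $y^\star\leq p/c$, is then trapped between that heteroclinic and the axis $\{z=0\}$, and converges to $(0,0)$ while remaining in $\{y>0\}$ (the linearisation at the origin has two negative real eigenvalues, so no sign change is forced). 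This produces a one-parameter family of positive, non-monotone ``hump'' solutions decaying to $0$, so uniqueness can only hold in the class of positive solutions tending to $1$ at $+\infty$ (equivalently here, of nondecreasing ones) --- which is the class actually needed in the paper, since the limits $u$ and $v$ are monotone with limit $1$. You should therefore either restrict the uniqueness claim to that class, where your $W^s(c)$ argument does close it (a bounded positive solution with $y\to1$ has $\omega$-limit $\{(1,0)\}$, hence lies on $W^s(c)$), or, for $|c|<2$, supply the missing exclusions explicitly: positive solutions tending to $0$ are ruled out by the spiral at the origin, and solutions escaping past $y=1$ must be ruled out separately (for instance because they blow up in finite time).
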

\begin{proof} This result was proved by Du and Lin in \cite{Du-1} (prop. 4.1) but wrongly stated. 
    Indeed, the requirement in their theorem should be $c<2$, not $c\geq 0$ as stated in \cite{Du-1}.
    (Moreover, be aware that our statement is written with $-cy_{c}'$ whereas their statement is 
    written 
    with $+cy_{c}'$; so the requirement $c<2$ becomes here $c>-2$; besides, this also
    changes the monotonicity of $\gamma$.)
    
Let us clear all doubts by filling the gap in their proof, that is the beginning where
they construct their subsolution. 

\begin{itemize}
    \item Case $|c|<2$: For all $\ell>0$, let $y^\ell$ the positive solution of 
$$\left\{ \begin{array}{ll}
-y''-cy' = y(1-y) &\hbox{ in } (0,\ell),\\
y(0)=y(\ell)=0.&\\
\end{array}\right.$$
According to Berestycki \cite{Berestycki} (theorem 4), such a solution exists if and only if the Dirichlet principal eigenvalue of the operator $-L$ on $(0,\ell)$ is negative: $\lambda_1 \big(-L, (0,\ell)\big)<0$, where $L$ is the operator associated with the linearized equation near $y=0$: 
$L\phi:= \phi''+c\phi'+\phi$. It is easy to compute: 
$$\lambda_1 \big(-L, (0,\ell)\big)= -1+c^2/4+\pi^2/\ell^2.$$
Hence, when $|c|<2$, one has $\lambda_1 \big(-L, (0,\ell)\big)<0$ when $\ell$ is large enough and 
thus we can construct $y^\ell$. Vice-versa, if $|c|\geq2$, $\lambda_1 \big(-L, (0,\ell)\big)>0$ and
the solution does not exist (whereas Du and Lin claim it does for all $c\leq0$).
    \item Case $c\geq2$: It suffices to remark that, for example, if $y_1$ is a solution of
the previous Dirichlet problem for some $c_1\in(-2,2)$, then $y_1$ is a subsolution for the 
Dirichlet problem with any speed $c>c_1$. 
\end{itemize}

In either case, the subsolution is now properly constructed and we can continue the proof as 
in \cite{Du-1} and conclude.

\end{proof} 

\begin{rem}
We need to change a bit $u$ and $v$ before pursuing in this direction.
Let us consider $\tilde{u}:\xi\mapsto u\left(-\xi\right)$ and $\tilde{v}:\xi\mapsto v\left(\sqrt{\frac{d}{r}}\xi\right)$.
$\tilde{u}$ is a solution of the problem 
\[
\left\{ \begin{matrix}-\tilde{u}''+c\tilde{u}'=\tilde{u}\left(1-\tilde{u}\right) & \mbox{in} & \left(0,+\infty\right)\\
\tilde{u}\left(0\right)=0
\end{matrix}\right.
\]

$\tilde{v}$ is a solution of the problem
\[
\left\{ \begin{matrix}-\tilde{v}''-\frac{c}{\sqrt{rd}}\tilde{v}'=\tilde{v}\left(1-\tilde{v}\right) & \mbox{in} & \left(0,+\infty\right)\\
\tilde{v}\left(0\right)=0
\end{matrix}\right.
\]

Besides, $c\in\left(-2\sqrt{rd},2\right)$ so $-c>-2$ and $\frac{c}{\sqrt{rd}}>-2$,
therefore we can apply the theorem.\end{rem}
\begin{cor}
For any $d>0$, there exists a unique $\left(u,v,c\right)$ satisfying
the limit problem (and may thereafter be called $\left(u_{\infty,d},v_{\infty,d},c_{\infty,d}\right)$). \end{cor}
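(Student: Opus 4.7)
The plan is to derive the corollary from the theorem just proved. Existence of a triple $(u,v,c)$ satisfying the limit problem was already produced by the compactness arguments of Section~3 together with the lemmas accumulated there, so only uniqueness remains to be established.

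Given any candidate solution $(u,v,c)$, the preceding remark reduces $u$ and $v$ to positive solutions of the Dirichlet problem of the theorem: $\tilde{u}(\xi)=u(-\xi)$ solves it with speed parameter $-c$, while $\tilde{v}(\xi)=v(\sqrt{d/r}\,\xi)$ solves it with speed parameter $c/\sqrt{rd}$. Since $c\in(-2\sqrt{rd},2)$ by the last lemma of the previous subsection, both of these speed parameters lie strictly above $-2$, so the theorem applies. It yields uniqueness of $\tilde{u}$ and $\tilde{v}$, hence of $u$ and $v$, \emph{once $c$ is known}. Everything therefore reduces to uniqueness of $c$.

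To pin $c$ down I would use the free boundary condition $\alpha u'(0)=-dv'(0)$, item~(10) of the limit problem. Differentiating the two changes of variable above gives $u'(0^-)=-\tilde{u}'(0^+)=-\gamma(-c)$ and $v'(0^+)=\sqrt{r/d}\,\tilde{v}'(0^+)=\sqrt{r/d}\,\gamma(c/\sqrt{rd})$, where $\gamma$ is the function from the theorem. Substituting, the free boundary condition becomes
\[
\alpha\,\gamma(-c)\;=\;\sqrt{rd}\,\gamma\!\left(\tfrac{c}{\sqrt{rd}}\right).
\]
Because $\gamma$ is strictly increasing and continuous, the left-hand side is a strictly decreasing continuous function of $c$, whereas the right-hand side is strictly increasing. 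Consequently, there is at most one $c\in(-2\sqrt{rd},2)$ for which this equality holds, which combined with the theorem yields uniqueness of the triple $(u,v,c)$.

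The only delicate point in carrying this out is the sign and scaling bookkeeping producing the equation above; once that is done, strict monotonicity of $\gamma$ does all the real work and no further input is needed. As a useful byproduct, uniqueness promotes the subsequential convergence of $(u_k,v_k,c_k)$ obtained in Section~3 to convergence of the whole sequence, which is what is ultimately needed for the locally uniform convergence asserted in the main theorem.
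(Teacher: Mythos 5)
Your proposal is correct and follows essentially the same route as the paper: the free boundary condition $\alpha u'(0)=-dv'(0)$ is rewritten as $\alpha\gamma(-c)=\sqrt{rd}\,\gamma\bigl(\tfrac{c}{\sqrt{rd}}\bigr)$, the opposite monotonicities of the two sides force uniqueness of $c$, and the uniqueness theorem for the half-line Dirichlet problem then gives uniqueness of $u$ and $v$. The only difference is that you make the sign and scaling bookkeeping explicit, which the paper leaves implicit.
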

\begin{proof}
The equality $-\alpha u'\left(0\right)=dv'\left(0\right)$ rewrites
$\alpha\gamma\left(-c\right)=\sqrt{rd}\gamma\left(\frac{c}{\sqrt{rd}}\right)$.
Now we consider the two functions $x\mapsto\alpha\gamma\left(-x\right)$
and $x\mapsto\sqrt{rd}\gamma\left(\frac{x}{\sqrt{rd}}\right)$. They
necessarily have an intersection point since $c$ exists. But as they
are respectively decreasing and increasing, this intersection point
is unique. 

The uniqueness of $c$ implies by the previous theorem the uniqueness
of $u$ and $v$. \end{proof}
\begin{cor}
The sequences $\left(c_{k}\right)$, $\left(u_{k}\right)$ and $\left(v_{k}\right)$
have a unique limit point each. Hence the pointwise convergence of
$\left(c_{k}\right)$ and locally uniform convergence of $\left(u_{k}\right)$
and $\left(v_{k}\right)$ are fully proved and there is no need to
consider extracted subsequences anymore.\end{cor}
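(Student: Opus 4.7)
The plan is a standard ``unique limit point in a compact set implies convergence'' argument, combining the precompactness we already established with the uniqueness statement of the previous corollary.

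First I would recall the compactness picture that has been set up. The sequence $(c_k)_{k>1}$ lies in the fixed compact interval $[-2\sqrt{rd},2]$, so it is relatively compact in $\mathbb{R}$; the sequences $(u_k)$ and $(v_k)$ are relatively compact in $\mathcal{C}(K_i)$ for every $i\geq1$ by the earlier Ascoli-based proposition, hence by a diagonal extraction relatively compact in $\mathcal{C}_{\mathrm{loc}}(\mathbb{R})$. Consequently, from any subsequence of $(c_k,u_k,v_k)$ one can extract a sub-subsequence converging to some triple $(c,u,v)$ in $\mathbb{R}\times\mathcal{C}_{\mathrm{loc}}(\mathbb{R})^2$.

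Next I would observe that every such limit triple satisfies the limit problem. This is precisely the content of the summary theorem of the previous subsection: the proofs of all the lemmas leading to items (1)--(10) only used extraction and passage to the limit, and the normalization $u_k(0)=1/2$ (if $c\le0$) or $v_k(0)=1/2$ (if $c>0$) was chosen specifically so that the translation in the corollary identifying the free boundary at~$0$ is consistent with the normalization carried along the subsequence. Therefore any limit point of $(c_k,u_k,v_k)$ is a solution of the limit problem.

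The previous corollary then forces all these limit points to coincide with the unique triple $(u_{\infty,d},v_{\infty,d},c_{\infty,d})$. I would conclude with the elementary fact that a relatively compact sequence in a Hausdorff space with a unique limit point converges to that point: applying this in $\mathbb{R}$ gives $c_k\to c_{\infty,d}$, and applying it in each $\mathcal{C}(K_i)$ (which is Hausdorff) gives $u_k\to u_{\infty,d}$ and $v_k\to v_{\infty,d}$ uniformly on each $K_i$, i.e.\ locally uniformly on $\mathbb{R}$. There is no real obstacle here; the only point requiring mild care is keeping the normalization compatible across the whole sequence (and not just along a subsequence), which is why the case split between $c\le0$ and $c>0$ was fixed once and for all before extraction.
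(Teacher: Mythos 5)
Your argument is correct and is essentially the paper's proof: the paper simply invokes the fact that a relatively compact sequence in a metric space with a unique limit point converges to it, taking the preceding compactness, limit-problem, and uniqueness results as given. Your additional remarks on the normalization being fixed once for the whole sequence are consistent with how the paper sets things up and do not change the argument.
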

\begin{proof}
Recall that, in any metric space, a sequence whose image is relatively
compact and which has a unique limit point converges to this limit
point.
\end{proof}
\begin{rem}
It is now clear that the sum up theorem of the previous section gives
sufficient but far from necessary conditions for uniqueness. For any
$c$, $u$ and $v$ are unique iff they are positive and satisfy points
4, 5, 8 and 9 and then the uniqueness of $c$ is just a consequence
of point 10. 
\end{rem}
\begin{prop}
The convergence of $\left(d\mapsto c_{k,d}\right)_{k>1}$ to $d\mapsto c_{\infty,d}$
is locally uniform.\end{prop}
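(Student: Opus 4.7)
The plan is to argue by contradiction, combining the uniqueness of the limit problem (just established) with the continuity of $d\mapsto c_{\infty,d}$ asserted in the main theorem. The idea is to rerun the analysis of Section~3 with the two parameters $k$ and $d$ varying jointly along a sequence, and then use uniqueness of the limit problem to identify every accumulation point.

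Suppose the convergence is not locally uniform on some compact $K\Subset(0,+\infty)$. Then there exist $\varepsilon_{0}>0$ and sequences $k_{n}\to+\infty$, $d_{n}\in K$ with $|c_{k_{n},d_{n}}-c_{\infty,d_{n}}|\geq\varepsilon_{0}$. Up to extraction, $d_{n}\to d^{\star}\in K$, and by continuity $c_{\infty,d_{n}}\to c_{\infty,d^{\star}}$; so it will suffice to prove that $c_{k_{n},d_{n}}\to c_{\infty,d^{\star}}$.

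The core technical step is to verify that the a priori estimates of Section~3.1 can be made uniform in $d\in K$. The bound $c_{k,d}\in(-2\sqrt{rd},2)$ is uniform since $d$ stays bounded. The $L^{2}$ estimate on $u_{k}'$ does not depend on $d$ at all. The analogous estimate for $v_{k}'$, obtained by multiplying the $v_{k}$ equation by $v_{k}\chi$ and integrating by parts, features a factor $1/d$ in front of $\|v_{k}'\|_{L^{2}(K_{i})}^{2}$; this stays controlled precisely because $K$ is bounded away from $0$, which is exactly why uniform convergence can only be expected to be local. Hence $(c_{k_{n},d_{n}})$ is bounded and, after fixing a normalization by case distinction on the sign of $c_{k_{n},d_{n}}$ and extracting a further subsequence, $(u_{k_{n}},v_{k_{n}})$ converges locally uniformly to some $(u^{\star},v^{\star})$ while $c_{k_{n},d_{n}}\to c^{\star}$.

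It remains to rerun each lemma of Section~3.2 with $d$ replaced by the varying $d_{n}\to d^{\star}$: the segregation $u^{\star}v^{\star}=0$ uses only $k_{n}\to+\infty$; passing to the distributional limit in $\alpha\cdot(\text{equation for }u_{k_{n}})-(\text{equation for }v_{k_{n}})$ yields the interface PDE with coefficient $d^{\star}$; the monotonicity, the $C^{1}$ regularity of $\alpha u^{\star}-d^{\star}v^{\star}$, the L'Hospital argument giving the limits at $\pm\infty$, the KPP-type argument excluding $c^{\star}\in\{-2\sqrt{rd^{\star}},2\}$ and the transmission condition $\alpha(u^{\star})'(0)=-d^{\star}(v^{\star})'(0)$ all proceed verbatim with $d^{\star}$ in place of $d$. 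Hence $(u^{\star},v^{\star},c^{\star})$ solves the limit problem with parameter $d^{\star}$, and the uniqueness corollary forces $c^{\star}=c_{\infty,d^{\star}}$. Since every accumulation point of the bounded sequence $(c_{k_{n},d_{n}})$ equals $c_{\infty,d^{\star}}$, the whole sequence converges to $c_{\infty,d^{\star}}$, contradicting $\varepsilon_{0}>0$. The main obstacle is thus purely bookkeeping: tracking the dependence on $d$ of every estimate from Section~3 and checking that the only place it plays a quantitative role is the $1/d$ factor in the $v_{k}'$ bound, which is exactly what restricts the conclusion to compact subsets of $(0,+\infty)$.
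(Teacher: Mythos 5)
Your proof is correct and follows essentially the same route as the paper, which simply observes that the whole compactness-and-uniqueness argument of Section 3 goes through with $d$ varying in a compact subset $\left[D_{1},D_{2}\right]\subset\left(0,+\infty\right)$, all estimates (the bound $c_{k,d}\in\left(-2\sqrt{rD_{2}},2\right)$, the $1/d$ factor in the $v_{k}'$ bound) being uniform there. The only point to watch is your appeal to the continuity of $d\mapsto c_{\infty,d}$, which the paper proves only in the next section (by an independent direct argument, so there is no circularity); alternatively, note that what you actually establish is continuous convergence ($c_{k_{n},d_{n}}\to c_{\infty,d^{\star}}$ for every $d_{n}\to d^{\star}$), which yields that continuity and the local uniformity simultaneously, making the forward reference unnecessary.
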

\begin{proof}
Actually, one can see easily that the whole proof of pointwise convergence
of $\left(d\mapsto c_{k,d}\right)_{k>1}$ holds if we do not fix a
priori $d$. It suffices to have $d\in\left[D_{1},D_{2}\right]$,
with $D_{2}>D_{1}>0$ fixed, so that we can replace bounds like $-2\sqrt{rd}$
by $-2\sqrt{rD_{2}}$. 
\end{proof}

\section{Dependency of the invasion speed on the diffusion rates}

This last section is where we derive from the limit problem the result:
how does the invasion speed $c$ depend on the diffusion rate $d$?
Thanks to the convergence of $\left(c_{k}\right)$ to $c$, we will
then be able to extend it to $c_{k}$ (for $k$ large enough).
\begin{thm}
We have:
\begin{itemize}
\item if $d=\frac{\alpha^{2}}{r}$, $c_{\infty,d}=0$; 
\item if $d>\frac{\alpha^{2}}{r}$, $c_{\infty,d}\in\left(-2\sqrt{rd},0\right)$;
\item if $d<\frac{\alpha^{2}}{r}$, $c_{\infty,d}\in\left(0,2\right)$.
\end{itemize}
\end{thm}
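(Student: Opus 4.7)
The plan is to reduce the whole statement to a monotonicity argument on a single one-variable equation. By the corollary on uniqueness, the limit triple $(u_{\infty,d},v_{\infty,d},c_{\infty,d})$ is characterised by the transmission condition $-\alpha u'(0)=dv'(0)$, which after the substitutions $\tilde u(\xi)=u(-\xi)$ and $\tilde v(\xi)=v(\sqrt{d/r}\,\xi)$ rewrites as
\begin{equation*}
\alpha\,\gamma(-c)=\sqrt{rd}\,\gamma\!\left(\tfrac{c}{\sqrt{rd}}\right),
\end{equation*}
where $\gamma$ is the increasing continuous function of the preceding theorem. Define on $(-2\sqrt{rd},2)$ the function
\begin{equation*}
F(c):=\alpha\,\gamma(-c)-\sqrt{rd}\,\gamma\!\left(\tfrac{c}{\sqrt{rd}}\right).
\end{equation*}
Since $\gamma$ is strictly increasing, $c\mapsto\gamma(-c)$ is strictly decreasing and $c\mapsto\gamma(c/\sqrt{rd})$ is strictly increasing, so $F$ is strictly decreasing. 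By the corollary, $c_{\infty,d}$ is the unique zero of $F$.

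Next I would evaluate $F$ at $c=0$. Because both sides of $F$ reduce to $\gamma(0)$ times a constant, one gets the decisive identity
\begin{equation*}
F(0)=(\alpha-\sqrt{rd})\,\gamma(0).
\end{equation*}
The one small technical point is to check $\gamma(0)>0$: this is immediate from Hopf's boundary lemma applied to the solution $y_{0}$ of the theorem, which is positive on $(0,+\infty)$ and vanishes at $0$ (alternatively, a direct phase-plane argument on the unstable manifold of the origin for the ODE $-y''=y(1-y)$ gives $y_{0}'(0)>0$). I expect this to be the only place where one has to do any real work; the rest is bookkeeping.

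With $\gamma(0)>0$ in hand, the three cases follow immediately from the strict monotonicity of $F$ together with the uniqueness of its zero. If $d=\alpha^{2}/r$, then $F(0)=0$, hence $c_{\infty,d}=0$. If $d>\alpha^{2}/r$, then $F(0)<0$, which forces the unique zero $c_{\infty,d}$ to lie strictly to the left of $0$; combined with the a priori bound $c_{\infty,d}\in(-2\sqrt{rd},2)$ inherited from Kan-On's estimate and preserved under passage to the limit, this yields $c_{\infty,d}\in(-2\sqrt{rd},0)$. Symmetrically, if $d<\alpha^{2}/r$, then $F(0)>0$ gives $c_{\infty,d}\in(0,2)$. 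The extension from $c_{\infty,d}$ to $c_{k,d}$ for $k$ large enough then follows from the locally uniform convergence $c_{k,d}\to c_{\infty,d}$ established in the previous section, together with the continuity of $d\mapsto c_{\infty,d}$ and the fact that the strict inequalities defining the three regimes are open conditions.
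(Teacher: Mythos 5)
Your proposal is correct and follows essentially the same route as the paper: both proofs reduce the statement to the relation $\alpha\gamma(-c)=\sqrt{rd}\,\gamma\!\left(\frac{c}{\sqrt{rd}}\right)$ and exploit the strict monotonicity and positivity of $\gamma$, your version merely packaging this as the sign of $F(0)=(\alpha-\sqrt{rd})\gamma(0)$ for the strictly decreasing function $F$ whose unique zero is $c_{\infty,d}$. The one imprecision is your remark that the strict bound $c_{\infty,d}\in\left(-2\sqrt{rd},2\right)$ is ``preserved under passage to the limit'' from Kan-On's estimate --- strict inequalities are not preserved by limits, and the paper in fact establishes $c\notin\left\{-2\sqrt{rd},2\right\}$ through a separate maximum-principle comparison with a KPP front, a lemma you should cite instead.
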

\begin{proof}
The sign of $c_{\infty,d}$ is actually a simple consequence of the
relation $\alpha\gamma\left(-c\right)=\sqrt{rd}\gamma\left(\frac{c}{\sqrt{rd}}\right)$.
Indeed, let us prove that $rd<\alpha^{2}$ implies $c_{\infty,d}>0$.
Indeed, if $rd<\alpha^{2}$, then $\frac{\sqrt{rd}}{\alpha}<1$ and
as $\gamma\left(\frac{c}{\sqrt{rd}}\right)>0$, we get $\frac{\sqrt{rd}}{\alpha}\gamma\left(\frac{c}{\sqrt{rd}}\right)<\gamma\left(\frac{c}{\sqrt{rd}}\right)$.
Since $\gamma$ is increasing, $\frac{c}{\sqrt{rd}}>-c$, which clearly
implies that $c>0$. The case $rd>\alpha^{2}$ is similar.

If $rd=\alpha^{2}$, the relation becomes $\gamma\left(-c\right)=\gamma\left(\frac{c}{\sqrt{rd}}\right)$.
An obvious zero of $x\mapsto\gamma\left(-x\right)-\gamma\left(\frac{x}{\sqrt{rd}}\right)$
is $0$, and by monotonicity it is unique, hence $c=0$.\end{proof}
\begin{prop}
The function $d\mapsto c_{\infty,d}$ is continuous in $\left(0,+\infty\right)$.
\end{prop}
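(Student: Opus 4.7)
The plan is to rely on the implicit characterization of $c_{\infty,d}$ obtained in the preceding corollary, namely that $c_{\infty,d}$ is the unique element of $(-2\sqrt{rd},2)$ satisfying
\[
\alpha\gamma(-c) \;=\; \sqrt{rd}\,\gamma\!\left(\tfrac{c}{\sqrt{rd}}\right),
\]
and to turn this into a continuity statement via a joint-continuity and strict-monotonicity argument on the function
\[
\Phi(c,d) \;:=\; \alpha\gamma(-c) - \sqrt{rd}\,\gamma\!\left(\tfrac{c}{\sqrt{rd}}\right),
\]
defined on the open set $U=\{(c,d)\in\mathbb{R}\times(0,+\infty)\,:\,-2\sqrt{rd}<c<2\}$.

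First I would record the two properties of $\Phi$ that drive the argument. By the theorem preceding this proposition, $\gamma:(-2,+\infty)\to(0,+\infty)$ is continuous and strictly increasing; composition with the continuous maps $c\mapsto -c$ and $(c,d)\mapsto c/\sqrt{rd}$, together with the continuity of $d\mapsto\sqrt{rd}$ on $(0,+\infty)$, shows that $\Phi$ is jointly continuous on $U$. Moreover, for each fixed $d_0>0$, the map $c\mapsto\Phi(c,d_0)$ is strictly decreasing on $(-2\sqrt{rd_0},2)$, since $\gamma(-c)$ is decreasing in $c$ and $\gamma(c/\sqrt{rd_0})$ is increasing in $c$. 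Its unique zero is $c_{\infty,d_0}$.

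Next I would prove continuity at an arbitrary $d_0>0$. Fix $\varepsilon>0$ small enough that the interval $[c_{\infty,d_0}-\varepsilon,c_{\infty,d_0}+\varepsilon]$ is contained in $(-2\sqrt{rd_0},2)$. Strict monotonicity gives
\[
\Phi(c_{\infty,d_0}-\varepsilon,d_0) \;>\; 0 \;>\; \Phi(c_{\infty,d_0}+\varepsilon,d_0).
\]
Joint continuity of $\Phi$ then yields a $\delta>0$ such that, for every $d\in(d_0-\delta,d_0+\delta)$, the points $c_{\infty,d_0}\pm\varepsilon$ still lie in $(-2\sqrt{rd},2)$ and
\[
\Phi(c_{\infty,d_0}-\varepsilon,d) \;>\; 0 \;>\; \Phi(c_{\infty,d_0}+\varepsilon,d).
\]
Applying the intermediate value theorem in $c$ and invoking uniqueness of the zero of $\Phi(\cdot,d)$, I conclude that $c_{\infty,d}\in(c_{\infty,d_0}-\varepsilon,c_{\infty,d_0}+\varepsilon)$, which is exactly continuity at $d_0$.

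There is no substantial obstacle in this scheme; the only care to be taken is the book-keeping on the domain, making sure that when $d$ varies slightly around $d_0$ the test points $c_{\infty,d_0}\pm\varepsilon$ remain in the domain of definition $(-2\sqrt{rd},2)$ of $\gamma(\cdot/\sqrt{rd})$. This is handled by choosing $\varepsilon$ first (so that the closed interval is strictly inside $(-2\sqrt{rd_0},2)$) and then $\delta$ small enough, which is possible by continuity of $d\mapsto -2\sqrt{rd}$. Everything else follows from the continuity and strict monotonicity of $\gamma$, already at our disposal.
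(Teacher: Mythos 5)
Your proof is correct, but it takes a genuinely different route from the paper. The paper proves continuity by re-running its compactness machinery: for a sequence $\delta_n\to\delta$ it shows that the speeds $c_{\infty,\delta_n}$ and the translated profiles $(u_{\infty,\delta_n},v_{\infty,\delta_n})$ are relatively compact, passes to the limit in the free-boundary problem, and identifies the limit with $(u_{\infty,\delta},v_{\infty,\delta},c_{\infty,\delta})$ by the uniqueness corollary. You instead exploit the scalar characterization $\alpha\gamma(-c)=\sqrt{rd}\,\gamma\bigl(c/\sqrt{rd}\bigr)$ already established in that corollary, and run a standard joint-continuity plus strict-monotonicity (intermediate value theorem) argument on $\Phi(c,d)=\alpha\gamma(-c)-\sqrt{rd}\,\gamma\bigl(c/\sqrt{rd}\bigr)$. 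This is legitimate: the continuity and (strict) monotonicity of $\gamma$ are part of the theorem the paper imports from Du and Lin, the existence of a zero of $\Phi(\cdot,d)$ for every $d$ is guaranteed by the existence of $c_{\infty,d}$, and your domain book-keeping (choosing $\varepsilon$ so that $[c_{\infty,d_0}-\varepsilon,c_{\infty,d_0}+\varepsilon]\subset(-2\sqrt{rd_0},2)$ before shrinking $\delta$) handles the only delicate point. Your argument is shorter and more elementary, and it isolates exactly which ingredient carries the continuity, namely $\gamma$; the paper's argument is heavier but more robust, in that it would survive even without an explicit scalar equation for the speed, and it is the same template the authors reuse elsewhere. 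One small caveat: make sure you are comfortable that ``increasing'' in the statement about $\gamma$ means strictly increasing, since both your argument and the paper's uniqueness corollary need this strictness; the paper uses it implicitly in exactly the same way, so you are on equal footing there.
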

\begin{proof}
    This could follow from the continuity of each $d\mapsto c_{k,d}$
and the locally uniform convergence, but the continuity of $d\mapsto c_{k,d}$
is actually a more difficult problem (and is not solved by Kan-On
\cite{Kan-On}). Therefore, we prove the continuity of $d\mapsto c_{\infty,d}$
directly. Our proof being basically a repetition of the whole previous section of
this article, we give only a sketch of it. 

First, let $0<D_{1}<D_{2}$. We have: 
\begin{eqnarray*}
\left\{ c_{\infty,d}\ |\ d\in\left[D_{1},D_{2}\right]\right\}  & \subset & \left\{ c_{\infty,d}\ |\ d\in\left[D_{1},D_{2}\right]\cap\left(\frac{\alpha^{2}}{r},+\infty\right)\right\} \cup\left\{ 0\right\} \cup\left\{ c_{\infty,d}\ |\ d\in\cap\left(0,\frac{\alpha^{2}}{r}\right)\right\} \\
 & \subset & \left(\bigcup_{d\in\left[D_{1},D_{2}\right]\cap\left(\frac{\alpha^{2}}{r},+\infty\right)}\left[-2\sqrt{rd},0\right]\right)\cup\left[0,2\right]\\
 & \subset & \left[-2\sqrt{rD_{2}},2\right]
\end{eqnarray*}

Thus, $\left\{ c_{\infty,d}\ |\ d\in\left[D_{1},D_{2}\right]\right\} $
is a relatively compact subset of $\mathbb{R}$. 

Now, let $\delta\in\left[D_{1},D_{2}\right]$ and $\left(\delta_{n}\right)_{n\in\mathbb{N}}\in\left[D_{1},D_{2}\right]^{\mathbb{N}}$
a positive sequence which converges to $\delta$. Up to extraction,
$\left(c_{\infty,\delta_{n}}\right)$ converges to a limit point $C$. 

If $C\leq0$, we translate each couple $\left(u_{\infty,\delta_{n}},v_{\infty,\delta_{n}}\right)$
so that $\left(u_{\infty,\delta_{n}}\left(0\right)\right)=\left(\frac{1}{2}\right)$.
If $C>0$, we translate each couple $\left(u_{\infty,\delta_{n}},v_{\infty,\delta_{n}}\right)$
so that $\left(v_{\infty,\delta_{n}}\left(0\right)\right)=\left(\frac{1}{2}\right)$.
In either case, $\left\{ u_{\infty,d}\ |\ d\in\left[D_{1},D_{2}\right]\right\} $
and $\left\{ v_{\infty,d}\ |\ d\in\left[D_{1},D_{2}\right]\right\} $
are relatively compact in each $C\left(K_{i}\right)$ by Ascoli\textquoteright s
theorem, and, up to extraction, $\left(u_{\infty,\delta_{n}}\right)$
and $\left(v_{\infty,\delta_{n}}\right)$ converge locally uniformly.
Let $U$ and $V$ be their limits. 
\begin{itemize}
\item We have $-\alpha U''+\delta V''-\alpha CU'+CV'=\alpha U\left(1-U\right)-rV\left(1-V\right)$
in $\mathcal{D}'\left(\mathbb{R}\right)$.
\item $U$ and $V$ are continuous, $\alpha U-\delta V$ is $C^{1}$.
\item $U$ and $V$ are positive and have finite limits at infinity.
\item $UV=0$.
\item If $C\leq0$, $U$ is not identically null by normalization and $V$
cannot be identically null since if it was, $U$ would be a travelling
wave for the Fisher \textendash{} KPP equation with a speed smaller
than $2$. The same reasoning applies for $C>0$ and finally, neither
$U$ nor $V$ can be identically null.
\item $U$ and $V$ cannot be both null on a compact subset by continuity
of $\left(\alpha U-\delta V\right)'$ and a Cauchy \textendash{} Lipschitz\textquoteright s
argument. 
\end{itemize}
Now we translate back so that 
\[
\sup\left\{ \xi\in\mathbb{R}\ |\ U\left(\xi\right)>0\right\} =\inf\left\{ \xi\in\mathbb{R}\ |\ V\left(\xi\right)>0\right\} =0
\]

It yields $U_{|\mathbb{R}_{+}}=0$, $V_{|\mathbb{R}_{-}}=0$, $-U''-CU'=U\left(1-U\right)$
in $\left(-\infty,0\right)$, $-\delta V''-CV'=rV\left(1-V\right)$
in $\left(0,+\infty\right)$ and $\alpha U'\left(0\right)=-\delta V'\left(0\right)$.
Basically, $C$, $U$ and $V$ verify the exact same problem than
$c_{\infty,\delta}$, $u_{\infty,\delta}$ and $v_{\infty,\delta}$.
By uniqueness, $C=c_{\infty,\delta}$, that is $c_{\infty,\delta}$
is the unique limit point of $\left(c_{\infty,\delta_{n}}\right)$
and eventually $c_{\infty,\delta_{n}}\to c_{\infty,\delta}$. Therefore,
$d\mapsto c_{\infty,d}$ is indeed continuous.

\end{proof}

\section{Conclusion}
We have proved our \textquotedblleft Unity is not strength\textquotedblright{} theorem.
Some remaining questions concern the shape of the asymptotic speed: what are the 
limits when $d\to0$ or $d\to +\infty$? Are there optimal diffusion rates so that the invasion
of one species or the other is the fastest? And eventually, how fast is the convergence to this 
asymptotic limit and, for example, is it monotone?

These could be adressed with the knowledge of the derivatives of the speed as a function of 
$k$ or $d$. These might be
determined analytically thanks to Kan-On formulas \cite{Kan-On}.
However, we did not manage to compute the sign of these derivatives,
that is, the monotonicity of the speed with respect to $k$ or $d$.
We leave it as an open problem.


\begin{thebibliography}{10}
\bibitem{Berestycki}
Berestycki, H., Le nombre de solutions de certains problèmes semi-linéaires elliptiques, Journal of Functional Analysis, 40 (1981), no. 1, 1\textendash 29.


\bibitem{Conti-1} Conti, M., Verzini, G., Terracini, S., A regularity
theory for optimal partition problems. SPT 2004\textemdash Symmetry
and perturbation theory, 91\textendash 98, World Sci. Publ., Hackensack,
NJ, 2005. 

\bibitem{Conti-2} Conti, M., Terracini, S., Verzini, Gk, A variational
problem for the spatial segregation of reaction-diffusion systems.
Indiana Univ. Math. J. 54 (2005), no. 3, 779\textendash 815. 

\bibitem{Dancer-2} Crooks, E. C. M., Dancer, E. N., Hilhorst, D.,
Mimura, M., Ninomiya, H., Spatial segregation limit of a competition-diffusion
system with Dirichlet boundary conditions. Nonlinear Anal. Real World
Appl. 5 (2004), no. 4, 645\textendash 665.

\bibitem{Dancer-1} Dancer, E. N., Hilhorst, D., Mimura, M., Peletier,
L. A., Spatial segregation limit of a competition-diffusion system.
European J. Appl. Math. 10 (1999), no. 2, 97\textendash 115. 

\bibitem{Dockery} Dockery, J., Hutson, V., Mischaikow, K., Pernarowski,
M., The evolution of slow dispersal rates: a reaction diffusion model.
J. Math. Biol. 37 (1998), no. 1, 61\textendash 83.

\bibitem{Du-1} Du, Y., Lin, Z., Spreading-vanishing dichotomy in
the diffusive logistic model with a free boundary. SIAM J. Math. Anal.,
42 (2010), 377\textendash 405.

\bibitem{Du-2} Du Y., Ma, L., Logistic type equations on $\mathbb{R}^{N}$
by a squeezing method involving boundary blow-up solutions. J. London
Math. Soc. (2) 64 (2001), no. 1, 107\textendash 124.

\bibitem{Gardner} Gardner, R. A., Existence and stability of travelling
wave solutions of competition models: a degree theoretic approach.
J. Differential Equations 44 (1982), no. 3, 343\textendash 364. 

\bibitem{Huang} Huang, W., Han, M., Non-linear determinacy of minimum
wave speed for a Lotka-Volterra competition model. J. Differential
Equations 251 (2011), no. 6, 1549\textendash 1561. 

\bibitem{Kan-On} Kan-On, Y., Parameter dependence of propagation
speed of travelling waves for competition \textendash{} diffusion
equations. SIAM J. Math. Anal. 26 (1995), no. 2, 340\textendash 363. 

\bibitem{KPP} Kolmogorov, A., Petrovsky, I., Piscounov, N., Étude
de l\textquoteright équation de la diffusion avec croissance de la
quantité de matière et son application à un problème biologique. Bulletin
Université d'Etat à Moscou (Bjul. Moskovskogo Gos. Univ.) (1937) 1\textendash 26. 

\bibitem{Lewis} Lewis, M. A., Li, B., Weinberger, H. F., Spreading
speed and linear determinacy for two-species competition models. J.
Math. Biol. 45 (2002), no. 3, 219\textendash 233.

\bibitem{Nakashima} Nakashima, K., Wakasa, T., Generation of 
interfaces for Lotka\textendash Volterra competition\textendash diffusion system with 
large interaction rates. J. Differential Equations 235 (2007), no. 2, 586\textendash 608.

\bibitem{Quitalo} Quitalo, V., A free boundary problem arising from
segregation of populations with high competition. Arch. Ration. Mech.
Anal. 210 (2013), no. 3, 857\textendash 908.\end{thebibliography}
\end{document}